\theoremstyle{plain}
\newtheorem{theorem}{Theorem}[section]
\newtheorem*{theorem*}{Theorem}
\newtheorem{prop}[theorem]{Proposition}
\newtheorem{cor}[theorem]{Corollary}
\newtheorem{rem}[theorem]{Remark}
\newtheorem{ex}[theorem]{Example}
\newtheorem*{mt*}{Main Theorem}
\newcommand\C{{\mathbb C}}
\newcommand\Z{{\mathbb Z}}
\newcommand{\del}{\partial}
\newcommand{\delbar}{\bar{\del}}
\DeclareMathOperator{\Ker}{Ker}
\renewcommand{\H}{\mathcal{H}}
\DeclareMathOperator{\cinf}{\mathcal{C}^\infty}
\let\phi\varphi
\DeclareMathOperator{\End}{End}
\DeclareMathOperator{\id}{id}
\newcommand{\lr}{\longrightarrow}
\DeclareMathOperator{\vol}{Vol}
\let\c\overline
\let\b\bar
\title[Bott-Chern harmonic forms and primitive decompositions 
]{Bott-Chern harmonic forms and primitive decompositions on compact almost K\"ahler manifolds}
\author[Riccardo Piovani and Nicoletta Tardini]{Riccardo Piovani and Nicoletta Tardini}
\address{Dipartimento di Scienze Matematiche, Fisiche e Informatiche\\
Unit\`{a} di Matematica e Informatica,
Universit\`{a} degli Studi di Parma\\
Parco Area delle Scienze 53/A, 43124 \\
Parma, Italy}
\email{riccardopiovani1994@gmail.com}
\email{riccardo.piovani@unipr.it}
\email{nicoletta.tardini@gmail.com}
\email{nicoletta.tardini@unipr.it}
\keywords{almost complex manifold; almost K\"ahler manifold; harmonic forms; Aeppli}
\thanks{\newline 
The first author is partially supported by GNSAGA of INdAM.
The second author is partially supported by GNSAGA of INdAM and has financially been supported by the Programme ``FIL-Quota Incentivante'' of University of Parma and co-sponsored by Fondazione Cariparma.}
\subjclass[2020]{32Q60, 53C15, 58A14}
\begin{document}

\maketitle

\begin{abstract}
Let $(X,J,\omega)$ be a compact $2n$-dimensional almost K\"ahler manifold. We prove primitive decompositions for Bott-Chern and Aeppli harmonic forms in special bidegrees and show that such bidegrees are optimal. We also show how the spaces of primitive Bott-Chern, Aeppli, Dolbeault and $\del$-harmonic forms on $(X,J,\omega)$ are related.
\end{abstract}

\section{Introduction}

Let $(X,J,\omega)$ be an almost Hermitian manifold of real dimension $2n$. Denote by 
\[
L:\Lambda^kX\to\Lambda^{k+2}X,\quad\alpha\mapsto\omega\wedge\alpha
\]
the Lefschetz operator, and by
\[
\Lambda:\Lambda^kX\to\Lambda^{k-2}X,\quad \Lambda:=-*L*
\]
its dual, where $*:\Lambda^kX\to\Lambda^{2n-k}$ is the Hodge $*$ operator.
A $k$-form $\alpha\in\Lambda^kX$, for $k\leq n$, is said to be {\em primitive} if $\Lambda\alpha=0$, or equivalently if $L^{n-k+1}\alpha=0$. Given that, the following vector bundle decomposition holds  
\begin{equation}\label{eq-prim-dec-forms-intro}
\Lambda^kX=\bigoplus_{r\geq\max(k-n,0)}L^r(P^{k-2r}X),
\end{equation}
where we denoted by
\[
P^{s}X:=\ker\big(\Lambda:\Lambda^{s}X\to\Lambda^{s-2}X\big)
\]
the bundle of primitive $s$-forms. The operators $L$ and $\Lambda$ extend to smooth sections, in particular to smooth $k$-forms $A^k:=\Gamma(X,\Lambda^kX)$ and to smooth $(p,q)$-forms $A^{p,q}:=\Gamma(X,\Lambda^{p,q}X)$. We also set $P^s:=\Gamma(X,P^sX)$ and $P^{p,q}:=\Gamma(X,P^{p,q}X)$, where $P^{p,q}X:=P^{p+q}X\cap\Lambda^{p,q}X$ is the bundle of primitive $(p,q)$-forms.

If $(X,J,\omega)$ is a compact K\"ahler manifold, then the Lefschetz decomposition theorem says that the primitive decomposition of forms \eqref{eq-prim-dec-forms-intro} descends to de Rham cohomology, i.e.,
\[
H^k_{dR}X=\bigoplus_{r\geq\max(k-n,0)}L^r\big(\ker\big(\Lambda:H^{k-2r}_{dR}X\to H^{k-2r-2}_{dR}X\big)\big).
\]

Cirici and Wilson recently proved a generalized Lefschetz decomposition theorem for compact almost K\"ahler manifolds. Denote by $\Delta_d:=dd^*+d^*d$ the Hodge Laplacian, where $d$ is the exterior differential and $d^*:=-*d*$ is its formal adjoint. The space of harmonic $(p,q)$-forms $\ker\Delta_d\cap A^{p,q}$ will be indicated by $\H^{p,q}_d$. They showed, see \cite[Corollary 5.4]{cirici-wilson-2}, that if $(X,J,\omega)$ is a compact almost K\"ahler manifold, then
\begin{equation}\label{eq-decomp-cirici}
\H^{p,q}_d=\bigoplus_{r\geq\max(p+q-n,0)}L^r(\H^{p-r,q-r}_d\cap P^{p-r,q-r}).
\end{equation}

Let $(X,J,\omega)$ be an almost Hermitian manifold, then other natural spaces of harmonic forms can be introduced.
The exterior differential decomposes into $d=\mu+\del+\delbar+\c\mu$, and we set $\del^*=-*\delbar*$, $\delbar^*=-*\del*$ as the formal adjoints of $\del$, $\delbar$, where $*$ is the $\C$-linear extension of the real Hodge $*$ operator.
Recall that
\begin{equation*}
\Delta_{\del}=\del\del^*+\del^*\del,\ \ \ \Delta_{\delbar}=\delbar\delbar^*+\delbar^*\delbar,
\end{equation*}
are respectively the $\del$ and $\delbar$, or Dolbeault, Laplacians, and
\begin{equation*}
\Delta_{BC}=
\del\delbar\delbar^*\del^*+
\delbar^*\del^*\del\delbar+\del^*\delbar\delbar^*\del+\delbar^*\del\del^*\delbar
+\del^*\del+\delbar^*\delbar,
\end{equation*}
and
\begin{equation*}
\Delta_{A}= \del\delbar\delbar^*\del^*+
\delbar^*\del^*\del\delbar+
\del\delbar^*\delbar\del^*+\delbar\del^*\del\delbar^*+
\del\del^*+\delbar\delbar^*,
\end{equation*}
are respectively the Bott-Chern and the Aeppli Laplacians. Denote by
\begin{equation*}
\H^{p,q}_{\del},\ \ \ \H^{p,q}_{\delbar},\ \ \  \H^{p,q}_{BC},\ \ \ \H^{p,q}_{A},
\end{equation*}
the kernels of these Laplacians intersected with the space of $(p,q)$-forms. If $X$ is compact these spaces are finite-dimensional but they do not have a cohomological counterpart. In fact, the almost complex Dolbeault, Bott-Chern and Aeppli cohomology groups might be infinite dimensional (see \cite{cirici-wilson-1}, \cite{coelho-placini-stelzig}).

In the integrable case, i.e., when $d=\del+\delbar$ and $(X,J)$ is a complex manifold, if we further assume that $X$ is compact and we endow $(X,J)$ with any Hermitian metric $\omega$, all these spaces of harmonic $(p,q)$-forms have a cohomological meaning and they coincide, i.e., 
\begin{equation*}
\H^{p,q}_d=\H^{p,q}_{\del}=\H^{p,q}_{\delbar}=\H^{p,q}_{BC}=\H^{p,q}_{A}.
\end{equation*}
Considered that, we are interested in understanding whether the primitive decomposition of harmonic forms \eqref{eq-decomp-cirici} holds also for these spaces of harmonic forms, and how these spaces are related on a given compact almost K\"ahler manifold of real dimension $2n$.

Another motivation for this problem is the following. In \cite{HZ} and \cite{HZ2} Holt and Zhang studied Dolbeault harmonic forms on the Kodaira-Thurston manifold to answer a famous question of Kodaira and Spencer which appeared as Problem 20 in Hirzebruch’s 1954 problem list \cite{hirzebruch}. Introducing an effective method to solve the PDE system associated to Dolbeault harmonic forms on the Kodaira-Thurston manifold, they proved that the dimension of the space of Dolbeault harmonic forms depends on the choice of the almost Hermitian metric. In \cite{TT}, Tomassini and the second author answered again to the same question, with a different approach, analyzing locally conformally almost K\"ahler metrics on almost complex $4$-manifolds. In \cite{PT4}, Tomassini and the first author introduced Bott-Chern and Aeppli harmonic forms on almost Hermitian manifolds and studied their relation with Dolbeault harmonic forms. 
See also \cite{Ho} and \cite{PT5} for recent results concerning the dimension of the spaces of Dolbeault and Bott-Chern harmonic $(1,1)$-forms on compact almost Hermitian $4$-manifolds.
In particular, in \cite[Proposition 6.1]{HZ}, in \cite[Theorem 3.6]{TT} and in \cite[Corollary 4.4]{PT4}, the primitive decomposition of $(1,1)$-forms  is used to deduce, in fact, primitive decompositions of Dolbeault and Bott-Chern harmonic $(1,1)$-forms on a compact almost Hermitian $4$-manifold. Given that, the study of primitive decompositions of Dolbeault, Bott-Chern, Aeppli harmonic forms can be seen as a generalisation of the just mentioned results in higher dimension $2n\ge4$ and for every bidegree $(p,q)$.

The case of primitive decompositions of Dolbeault harmonic forms is studied in \cite{cattaneo-tardini-tomassini}. In this paper, we are interested in studying primitive decompositions of Bott-Chern and Aeppli harmonic forms on a given $2n$-dimensional compact almost K\"ahler manifold $(X,J,\omega)$.  Indeed, we prove
\begin{theorem}[Theorems \ref{thm:decomp-bc}, \ref{thm:decomp-ae}, \ref{thm:decomp-bc-2}]\label{thm-main}
Let $(X,J,\omega)$ be a compact almost K\"ahler manifold of dimension $2n$. Then,
\begin{align*}
\mathcal{H}^{1,1}_{BC}&=\C\,\omega\oplus \left(\mathcal{H}^{1,1}_{BC}\cap P^{1,1}\right)\,,\\
\mathcal{H}^{1,1}_{A}&=\C\,\omega\oplus \left(\mathcal{H}^{1,1}_{A}\cap P^{1,1}\right)\,,\\
\mathcal{H}^{n-1,n-1}_{BC}&=\C\,\omega^{n-1}
\oplus L^{n-2}\left(\mathcal{H}^{1,1}_{A}\cap P^{1,1}\right)\,,\\
\mathcal{H}^{n-1,n-1}_{A}&=\C\,\omega^{n-1}
\oplus L^{n-2}\left(\mathcal{H}^{1,1}_{BC}\cap P^{1,1}\right)\,.
\end{align*}
\end{theorem}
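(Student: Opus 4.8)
The plan is to prove the four primitive decompositions by reducing everything to an analysis of the Bott-Chern and Aeppli harmonic conditions in the two relevant bidegrees, $(1,1)$ and $(n-1,n-1)$, and then exploiting the interplay between these conditions under the Lefschetz operators $L$, $L^{n-2}$ and the Hodge star $*$.

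First I would handle the $(1,1)$ decompositions. Any $(1,1)$-form $\alpha$ splits via the pointwise primitive decomposition \eqref{eq-prim-dec-forms-intro} as $\alpha = \frac{\langle\alpha,\omega\rangle}{n}\,\omega + \alpha_0$ with $\alpha_0\in P^{1,1}$, since in bidegree $(1,1)$ the only summands are $\C\,\omega = L(P^{0,0})$ and $P^{1,1}$. The content of the first two statements is therefore that, for $\alpha\in\mathcal{H}^{1,1}_{BC}$ (resp. $\mathcal{H}^{1,1}_{A}$), \emph{both} the multiple of $\omega$ and the primitive part $\alpha_0$ are again $BC$-harmonic (resp. $A$-harmonic). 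The natural approach is to show that the orthogonal projections onto $\C\,\omega$ and onto $P^{1,1}$ commute with the relevant Laplacian on harmonic forms; concretely, I would verify that if $\Delta_{BC}\alpha=0$ then the scalar function $f:=\langle\alpha,\omega\rangle$ is forced to be constant, so that $f\,\omega$ is harmonic and hence $\alpha_0$ is too. Since $(X,J,\omega)$ is almost K\"ahler, $\omega$ is $d$-closed, hence $\del\omega=\delbar\omega=0$ and $\del^*\omega=\delbar^*\omega=0$; this should make $c\,\omega$ lie in the kernel of both $\Delta_{BC}$ and $\Delta_{A}$ for every constant $c$, which gives the $\C\,\omega$ summand for free and reduces the problem to controlling $f$.

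Next I would address the $(n-1,n-1)$ decompositions, which are the substance of the theorem and where the $BC$/$A$ \emph{swap} appears. The key structural fact is that the Hodge star $*$ intertwines the Bott-Chern and Aeppli Laplacians, $*\Delta_{BC}=\Delta_{A}*$, so that $*$ gives an isomorphism $\mathcal{H}^{p,q}_{BC}\xrightarrow{\sim}\mathcal{H}^{n-q,n-p}_{A}$. On a primitive $(1,1)$-form $\beta$ one has the classical Weil identity $*\beta = -\frac{1}{(n-2)!}L^{n-2}\beta$ (up to the sign/constant appropriate to our conventions), and $*\,\omega=\frac{1}{(n-1)!}\omega^{n-1}$. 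Applying $*$ to the already-established $(1,1)$ decomposition for $\mathcal{H}^{1,1}_{A}$ thus carries it to a decomposition of $\mathcal{H}^{n-1,n-1}_{BC}$, sending $\C\,\omega$ to $\C\,\omega^{n-1}$ and $\mathcal{H}^{1,1}_{A}\cap P^{1,1}$ to $L^{n-2}(\mathcal{H}^{1,1}_{A}\cap P^{1,1})$; symmetrically, $*$ applied to the $\mathcal{H}^{1,1}_{BC}$ decomposition yields the $\mathcal{H}^{n-1,n-1}_{A}$ statement. This explains the crossing of the labels $BC\leftrightarrow A$ between bidegree $(1,1)$ and $(n-1,n-1)$.

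I expect the main obstacle to be the $(1,1)$ step, specifically showing that the harmonicity of $\alpha$ forces $f=\langle\alpha,\omega\rangle$ to be constant. In the integrable K\"ahler case this follows from the identification of harmonic forms with cohomology plus the Lefschetz decomposition of cohomology, but here there is no cohomological interpretation, so the argument must be purely analytic and must use almost K\"ahlerness (not merely almost Hermitian) in an essential way. The likely mechanism is that the fourth-order operators $\del\delbar$ and $\delbar\del$ acting on $f\,\omega$ produce, via $\del\delbar(f\,\omega)=(\del\delbar f)\wedge\omega$ and the commutation relations between $L$, $\Lambda$, $\del$, $\delbar$ specific to the almost K\"ahler setting, a second-order elliptic condition on $f$ whose only solutions on a compact manifold are constants (an integration-by-parts or maximum-principle argument). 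Once the constancy of $f$ is secured, the remaining verifications—that the two summands are orthogonal, that the sum is direct, and that the $*$-transport in the second step respects primitivity—are routine, relying on \eqref{eq-prim-dec-forms-intro}, the Weil identities, and $*\Delta_{BC}=\Delta_{A}*$.
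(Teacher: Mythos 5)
Your proposal follows essentially the same route as the paper: decompose $\psi=f\omega+\gamma$ with $\gamma\in P^{1,1}$, use the almost K\"ahler condition and the Weil formula to extract from the Bott-Chern (resp.\ Aeppli) harmonicity conditions a second-order equation of the form $\omega^{n-1}\wedge\del\delbar f=0$, conclude $f$ is constant by strong ellipticity and the maximum principle on the compact manifold, and then transport the $(1,1)$ decompositions to bidegree $(n-1,n-1)$ via $*\Delta_{BC}=\Delta_A*$ and the Weil identities, which produces exactly the $BC\leftrightarrow A$ swap. The only part left implicit--the explicit derivation of the elliptic condition on $f$ and the check that $\del\gamma=\delbar\gamma=0$ together with $\Lambda\gamma=0$ forces $\del\delbar*\gamma=0$--is carried out in the paper by a short direct computation and is routine given your stated plan.
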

This, in particular, generalizes the decomposition of $\mathcal{H}^{1,1}_{BC}$ in real dimension $4$ of \cite[Corollary 4.4]{PT4} to higher dimensions.
As a corollary of Theorem \ref{thm-main}, \cite[Proposition 6.1]{HZ} and \cite[Corollary 4.4]{PT4}, we derive
\begin{cor}[Corollaries \ref{cor-11}, \ref{cor-11-harmonic}]
Let $(X,J,\omega)$ be a compact almost K\"ahler manifold of dimension $4$. Then,
\[
\mathcal{H}^{1,1}_{d}=\mathcal{H}^{1,1}_{\del}=\mathcal{H}^{1,1}_{\delbar}=\mathcal{H}^{1,1}_{BC}=\mathcal{H}^{1,1}_{A}.
\]
\end{cor}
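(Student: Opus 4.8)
The plan is to reduce the five equalities to a single statement about primitive $(1,1)$-forms, exploiting that in real dimension $4$ such forms are eigenforms of the Hodge star operator. First I would use bidegree constraints: since $\mu$ and $\bar\mu$ send a $(1,1)$-form into $A^{3,0}=0$ and $A^{0,3}=0$, every $\alpha\in A^{1,1}$ satisfies $d\alpha=\del\alpha+\delbar\alpha$ and $d^*\alpha=\del^*\alpha+\delbar^*\alpha$, with the four terms in pairwise distinct bidegrees. Hence on $(1,1)$-forms $\H^{1,1}_d=\ker\del\cap\ker\delbar\cap\ker\del^*\cap\ker\delbar^*$, while pairing against the explicit expressions of $\Delta_{BC}$ and $\Delta_{A}$ given above yields $\H^{1,1}_{BC}=\ker\del\cap\ker\delbar\cap\ker\delbar^*\del^*$ and $\H^{1,1}_{A}=\ker\del^*\cap\ker\delbar^*\cap\ker\del\delbar$, together with $\H^{1,1}_{\delbar}=\ker\delbar\cap\ker\delbar^*$ and $\H^{1,1}_{\del}=\ker\del\cap\ker\del^*$. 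In particular $\H^{1,1}_d$ is contained in each of the other four spaces, and $\omega$, being $d$-harmonic, belongs to all five.

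Next I would invoke the primitive decompositions to write $\H^{1,1}_{\bullet}=\C\,\omega\oplus(\H^{1,1}_{\bullet}\cap P^{1,1})$ for each $\bullet\in\{d,\del,\delbar,BC,A\}$. For $\bullet=d$ this is \eqref{eq-decomp-cirici}, in which only $r=0,1$ contribute and $L(\H^{0,0}_d\cap P^{0,0})=\C\,\omega$ because $\H^{0,0}_d=\C$; for $\bullet=BC,A$ it is Theorem \ref{thm-main}; for $\bullet=\delbar$ it is \cite[Proposition 6.1]{HZ}; and for $\bullet=\del$ it follows by complex conjugation from the $\delbar$ case, since $\overline{\H^{1,1}_{\delbar}}=\H^{1,1}_{\del}$ while $\C\,\omega$ and $P^{1,1}$ are conjugation-invariant. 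It therefore remains only to identify the primitive parts.

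The crux is the following elementary computation. A primitive $(1,1)$-form $\beta$ on a $4$-manifold is anti-self-dual, $*\beta=-\beta$; combined with $\del^*=-*\delbar*$ and $\delbar^*=-*\del*$ this gives $\del^*\beta=*\delbar\beta$ and $\delbar^*\beta=*\del\beta$. Thus on $P^{1,1}$ the conditions $\del^*\beta=0$ and $\delbar\beta=0$ are equivalent, and likewise $\delbar^*\beta=0$ and $\del\beta=0$. Substituting into the five characterizations above, every harmonicity condition collapses to the single system $\del\beta=0,\ \delbar\beta=0$: for example $\delbar^*\del^*\beta=0$ and $\del\delbar\beta=0$ become automatic once $\del\beta=\delbar\beta=0$. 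Hence $\H^{1,1}_{\bullet}\cap P^{1,1}=\{\beta\in P^{1,1}:\del\beta=\delbar\beta=0\}$ for all five $\bullet$, and combining with the decomposition of the previous step shows that each $\H^{1,1}_{\bullet}$ equals $\C\,\omega\oplus\{\beta\in P^{1,1}:\del\beta=\delbar\beta=0\}$, which proves the corollary.

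I expect the only real obstacle to be bookkeeping: checking that the fourth-order Bott--Chern and Aeppli conditions genuinely reduce to $\ker\del\cap\ker\delbar$ on primitive forms, which hinges on the sign $*\beta=-\beta$ special to bidegree $(1,1)$ in dimension $4$, and assembling the $\del$-decomposition from the $\delbar$-one by conjugation. Beyond the self-duality identity and the cited primitive decompositions, no hard analysis is required.
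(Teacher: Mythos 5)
Your proof is correct and follows essentially the same route as the paper: decompose each of the five spaces as $\C\,\omega$ plus its primitive part (via the Cirici--Wilson decomposition, Theorems \ref{thm:decomp-bc}--\ref{thm:decomp-bc-2} and \cite[Proposition 6.1]{HZ}), then use the anti-self-duality $*\beta=-\beta$ of primitive $(1,1)$-forms in real dimension $4$ to see that every harmonicity condition on the primitive part collapses to $\del\beta=\delbar\beta=0$. The paper compresses this last identification into citations of \cite[Proposition 6.1]{HZ} and \cite[Corollary 4.4]{PT4}, which describe the primitive parts as anti-self-dual $d$-harmonic forms, whereas you carry out the equivalent computation explicitly; the underlying argument is the same.
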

Considered that the spaces of primitive Bott-Chern and Aeppli harmonic $(1,1)$-forms turned out to be useful for the above decompositions, and that the same holds for $\del$ and Dolbeault harmonic $(1,1)$-forms by the results of \cite{cattaneo-tardini-tomassini}, we study the relations among all these spaces of primitive harmonic forms on compact almost K\"ahler manifolds. In particular, we analyze inclusions and non inclusions between these spaces. We prove
\begin{prop}[Propositions \ref{prop:bc-in-delbar-in-aeppli} and \ref{prop:equalities-deg-n}]
Let $(X,J,\omega)$ be a compact almost K\"ahler manifold of dimension $2n$.
Then, for $p+q\leq n$,
\begin{gather*}
\mathcal{H}^{p,q}_{BC}\cap P^{p,q}=
\mathcal{H}^{p,q}_{\delbar}\cap \mathcal{H}^{p,q}_{\del}\cap P^{p,q},\\
\mathcal{H}^{p,q}_{\delbar}\cap P^{p,q}\subseteq 
\mathcal{H}^{p,q}_{A}\cap P^{p,q}.
\end{gather*}
Moreover, for $p+q= n$,
\[
\mathcal{H}^{p,q}_{BC}\cap P^{p,q}=
\mathcal{H}^{p,q}_{\delbar}\cap P^{p,q}=
 \mathcal{H}^{p,q}_{\del}\cap P^{p,q}=
\mathcal{H}^{p,q}_{A}\cap P^{p,q}.
\]
\end{prop}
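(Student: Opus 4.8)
The plan is to reduce everything to two elementary facts about primitive forms, after recording the kernel descriptions of the four harmonic spaces. On a compact manifold, expanding $\langle\Delta_{BC}\alpha,\alpha\rangle$ and $\langle\Delta_{A}\alpha,\alpha\rangle$ into a sum of squared $L^2$-norms (as in \cite{PT4}) shows that
\[
\H^{p,q}_{BC}=\ker\del\cap\ker\delbar\cap\ker(\delbar^*\del^*),\qquad
\H^{p,q}_{A}=\ker\del^*\cap\ker\delbar^*\cap\ker(\del\delbar),
\]
while $\H^{p,q}_{\del}=\ker\del\cap\ker\del^*$ and $\H^{p,q}_{\delbar}=\ker\delbar\cap\ker\delbar^*$ as usual. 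Two ingredients will do the rest. First, since $(X,J,\omega)$ is almost K\"ahler we have $d\omega=0$, and comparing bidegrees in $0=[d,L]$ yields $[\del,L]=[\delbar,L]=0$. Second, the pointwise Weil identity for primitive forms: if $\alpha\in P^{p,q}$ with $k:=p+q\le n$, then $*\alpha=c_{p,q}\,L^{n-k}\alpha$ for a nonzero constant $c_{p,q}$ depending only on $(p,q,n)$; this is linear algebra at each point and so is valid on any almost Hermitian manifold.

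The key step is the following \emph{primitive duality}: for $\alpha\in P^{p,q}$ with $p+q\le n$,
\[
\del\alpha=0\ \Longrightarrow\ \delbar^*\alpha=0,\qquad
\delbar\alpha=0\ \Longrightarrow\ \del^*\alpha=0.
\]
Indeed, using $\delbar^*=-*\del*$ and $*\alpha=c_{p,q}L^{n-k}\alpha$ together with $[\del,L]=0$, we get $\delbar^*\alpha=-c_{p,q}*L^{n-k}\del\alpha$, which vanishes when $\del\alpha=0$; the second implication is symmetric, via $\del^*=-*\delbar*$ and $[\delbar,L]=0$. From this the first two assertions follow immediately. For the equality $\H^{p,q}_{BC}\cap P^{p,q}=\H^{p,q}_{\delbar}\cap\H^{p,q}_{\del}\cap P^{p,q}$, the inclusion $\supseteq$ is trivial (if $\del\alpha=\del^*\alpha=\delbar\alpha=\delbar^*\alpha=0$ then in particular $\delbar^*\del^*\alpha=0$), and for $\subseteq$ a primitive $\alpha\in\H^{p,q}_{BC}$ satisfies $\del\alpha=\delbar\alpha=0$, whence primitive duality gives $\delbar^*\alpha=\del^*\alpha=0$, i.e.\ $\alpha\in\H^{p,q}_{\delbar}\cap\H^{p,q}_{\del}$. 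For the inclusion $\H^{p,q}_{\delbar}\cap P^{p,q}\subseteq\H^{p,q}_{A}\cap P^{p,q}$, a primitive $\alpha\in\H^{p,q}_{\delbar}$ has $\delbar\alpha=\delbar^*\alpha=0$; then $\del\delbar\alpha=0$ and, by primitive duality, $\del^*\alpha=0$, so $\alpha\in\H^{p,q}_{A}$.

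Finally, for $p+q=n$ we have $n-k=0$, so the Weil identity reads $*\alpha=c_{p,q}\alpha$ with $c_{p,q}\ne0$; in particular $*$ maps $P^{p,q}$ isomorphically to itself. Hence the implications above become equivalences: since $\delbar^*\alpha=-c_{p,q}*\del\alpha$ and $*$ is invertible, $\delbar^*\alpha=0\Leftrightarrow\del\alpha=0$, and symmetrically $\del^*\alpha=0\Leftrightarrow\delbar\alpha=0$. Consequently, on $P^{p,q}$ the conditions $\del\alpha=\delbar\alpha=0$ and $\del^*\alpha=\delbar^*\alpha=0$ define the same subspace, and each of $\H^{p,q}_{\del}\cap P^{p,q}$, $\H^{p,q}_{\delbar}\cap P^{p,q}$, $\H^{p,q}_{BC}\cap P^{p,q}$, $\H^{p,q}_{A}\cap P^{p,q}$ is seen to equal the common space $\{\alpha\in P^{p,q}:\del\alpha=\delbar\alpha=\del^*\alpha=\delbar^*\alpha=0\}$ (for $BC$ and $A$ the remaining fourth-order condition is then automatic). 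This proves the chain of equalities.

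I expect the only real subtleties to be bookkeeping ones: checking that the fourth-order conditions defining $\H_{BC}$ and $\H_{A}$ are genuinely implied once $\del,\delbar$ (resp.\ $\del^*,\delbar^*$) annihilate $\alpha$, and confirming that $c_{p,q}\ne0$ so that the Hodge star may be inverted when $p+q=n$; both are routine. The conceptual heart is the primitive duality together with the self-duality of $P^{p,q}$ under $*$ in middle degree.
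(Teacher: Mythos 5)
Your proof is correct and follows essentially the same route as the paper: both hinge on the Weil identity $*\alpha=c_{p,q}\,L^{n-p-q}\alpha$ for primitive $\alpha$ together with $d\omega=0$ (so $L$ commutes with $\del$ and $\delbar$), which is exactly your ``primitive duality,'' and the middle-degree case $p+q=n$ is handled identically via $*\alpha=c_{p,q}\alpha$. The only difference is cosmetic: you phrase the conditions through the adjoints $\del^*,\delbar^*$ where the paper writes $\del*,\delbar*$.
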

In Proposition \ref{prop:not-in} we show that such inclusions are in general strict, and provide other non inclusions.

Finally, we show that the primitive decompositions of Bott-Chern and Aeppli harmonic forms obtained for the bidegrees $(1,1)$ and $(n-1,n-1)$ are exclusive for these bidegrees. In fact, working on an explicit almost K\"ahler structure on the Iwasawa manifold, we show that the natural primitive decomposition of Bott-Chern harmonic forms one would expect on $(2,1)$-forms in real dimension $6$ does not hold.

The paper is organised in the following way. In section \ref{section:preliminaries} we introduce some preliminaries of almost Hermitian geometry, including some observations on the other possible definitions of the Bott-Chern and Aeppli Laplacians. In section \ref{sec-prim-dec}, we write down some trivial decompositions of Bott-Chern and Aeppli harmonic forms for the special bidegrees $(p,0)$, $(0,q)$, $(n,n-p)$ and $(n-q,n)$, and then we prove the non trivial decompositions for the bidegrees $(1,1)$ and $(n-1,n-1)$ stated in Theorem \ref{thm-main}. In section \ref{sec-relations} we study the possible inclusions and non inclusions among the spaces of primitive $\del$, $\delbar$, Bott-Chern and Aeppli harmonic forms. Finally, in section \ref{sec-dim-6}, we analyze primitive decompositions of Bott-Chern and Aeppli harmonic forms in dimension $6$.

\medskip
\noindent{\sl Acknowledgments.}  The authors would like to thank Andrea Cattaneo and Adriano Tomassini for several discussions on the subject.

\section{Preliminaries of almost Hermitian geometry}\label{section:preliminaries}

Throughout this paper, we will only consider connected manifolds without boundary.
Let $(X,J)$ be an almost complex manifold of dimension $2n$, i.e., a $2n$-differentiable manifold together with an almost complex structure $J$, that is $J\in\End(TX)$ and $J^2=-\id$. The complexified tangent bundle $T_{\C}X=TX\otimes\C$ decomposes into the two eigenspaces of $J$ associated to the eigenvalues $i,-i$, which we denote respectively by $T^{1,0}X$ and $T^{0,1}X$, giving
\begin{equation*}
T_{\C}X=T^{1,0}X\oplus T^{0,1}X.
\end{equation*}
Denoting by $\Lambda^{1,0}X$ and $\Lambda^{0,1}X$ the dual vector bundles of $T^{1,0}X$ and $T^{0,1}X$, respectively, we set
\begin{equation*}
\Lambda^{p,q}X=\bigwedge^p\Lambda^{1,0}X\wedge\bigwedge^q\Lambda^{0,1}X
\end{equation*}
to be the vector bundle of $(p,q)$-forms, and let $A^{p,q}=\Gamma(X,\Lambda^{p,q}X)$ be the space of smooth sections of $\Lambda^{p,q}X$. We denote by $A^k=\Gamma(X,\Lambda^{k}X)$ the space of $k$-forms. Note that $\Lambda^{k}X\otimes\C=\bigoplus_{p+q=k}\Lambda^{p,q}X$.

Let $f\in\cinf(X,\C)$ be a smooth function on $X$ with complex values. Its differential $df$ is contained in $A^1\otimes\C=A^{1,0}\oplus A^{0,1}$. On complex 1-forms, the exterior differential acts as
\[
d:A^1\otimes\C\to A^2\otimes\C=A^{2,0}\oplus A^{1,1}\oplus A^{0,2}.
\]
 Therefore, it turns out that the differential operates on $(p,q)$-forms as
\begin{equation*}
d:A^{p,q}\to A^{p+2,q-1}\oplus A^{p+1,q}\oplus A^{p,q+1}\oplus A^{p-1,q+2},
\end{equation*}
where we denote the four components of $d$ by
\begin{equation*}
d=\mu+\del+\delbar+\c\mu.
\end{equation*}
From the relation $d^2=0$, we derive
\begin{equation*}
\begin{cases}
\mu^2=0,\\
\mu\del+\del\mu=0,\\
\del^2+\mu\delbar+\delbar\mu=0,\\
\del\delbar+\delbar\del+\mu\c\mu+\c\mu\mu=0,\\
\delbar^2+\c\mu\del+\del\c\mu=0,\\
\c\mu\delbar+\delbar\c\mu=0,\\
\c\mu^2=0.
\end{cases}
\end{equation*}

Let $(X,J)$ be an almost complex manifold. If the almost complex structure $J$ is induced from a complex manifold structure on $X$, then $J$ is called integrable. It is equivalent to the decomposition of the exterior differential as $d=\del+\delbar$.

A Riemannian metric on $X$ for which $J$ is an isometry is called almost Hermitian.
Let $g$ be an almost Hermitian metric, the $2$-form $\omega$ such that
\begin{equation*}
\omega(u,v)=g(Ju,v)\ \ \forall u,v\in\Gamma(TX)
\end{equation*}
is called the fundamental form of $g$. We will call $(X,J,\omega)$ an almost Hermitian manifold.
We denote by $h$ the Hermitian extension of $g$ on the complexified tangent bundle $T_\C X$, and by the same symbol $g$ the $\C$-bilinear symmetric extension of $g$ on $T_\C X$. Also denote by the same symbol $\omega$ the $\C$-bilinear extension of the fundamental form $\omega$ of $g$ on $T_\C X$. 
Thanks to the elementary properties of the two extensions $h$ and $g$, we may want to consider $h$ as a Hermitian operator
$T^{1,0}X\times T^{1,0}X\to\C$ and $g$ as a $\C$-bilinear operator $T^{1,0}X\times T^{0,1}X\to\C$.
Recall that $h(u,v)=g(u,\bar{v})$ for all $u,v\in \Gamma(T^{1,0}X)$.

Let $(X,J,\omega)$ be an almost Hermitian manifold of real dimension $2n$. Extend $h$ on $(p,q)$-forms and denote the Hermitian inner product by $\langle\cdot,\cdot\rangle$.
Let $*:A^{p,q}\lr A^{n-q,n-p}$ the $\C$-linear extension of the standard Hodge $*$ operator on Riemannian manifolds with respect to the volume form $\vol=\frac{\omega^n}{n!}$, i.e., $*$ is defined by the relation
\[
\alpha\wedge{*\c\beta}=\langle\alpha,\beta\rangle\vol\ \ \ \forall\alpha,\beta\in A^{p,q}.
\]
Then the operators
\begin{equation*}
d^*=-*d*,\ \ \ \mu^*=-*\c\mu*,\ \ \ \del^*=-*\delbar*,\ \ \ \delbar^*=-*\del*,\ \ \ \c\mu^*=-*\mu*,
\end{equation*}
are the formal adjoint operators respectively of $d,\mu,\del,\delbar,\c\mu$. Recall that $\Delta_{d}=dd^*+d^*d$ is the Hodge Laplacian, and, as in the integrable case, set 
\begin{equation*}
\Delta_{\del}=\del\del^*+\del^*\del,\ \ \ \Delta_{\delbar}=\delbar\delbar^*+\delbar^*\delbar,
\end{equation*}
respectively as the $\del$ and $\delbar$ Laplacians. Again, as in the integrable case, set
\begin{equation*}
\Delta_{BC}=
\del\delbar\delbar^*\del^*+
\delbar^*\del^*\del\delbar+\del^*\delbar\delbar^*\del+\delbar^*\del\del^*\delbar
+\del^*\del+\delbar^*\delbar,
\end{equation*}
and
\begin{equation*}
\Delta_{A}= \del\delbar\delbar^*\del^*+
\delbar^*\del^*\del\delbar+
\del\delbar^*\delbar\del^*+\delbar\del^*\del\delbar^*+
\del\del^*+\delbar\delbar^*,
\end{equation*}
respectively as the Bott-Chern and the Aeppli Laplacians. Note that
\begin{equation}\label{bc-a-duality}
*\Delta_{BC}=\Delta_{A}*\ \ \ \Delta_{BC}*=*\Delta_{A}.
\end{equation}

If $X$ is compact, then we easily deduce the following relations
\begin{equation*}
\begin{cases}
\Delta_{d}=0\ &\iff\ d=0,\ d*=0,\\
\Delta_{\del}=0\ &\iff\ \del=0,\ \delbar*=0,\\
\Delta_{\delbar}=0\ &\iff\ \delbar=0,\ \del*=0,\\
\Delta_{BC}=0\ &\iff \del=0,\ \delbar=0,\ \del\delbar*=0,\\
\Delta_{A}=0\ &\iff \del*=0,\ \delbar*=0,\ \del\delbar=0,
\end{cases}
\end{equation*}
which characterize the spaces of harmonic forms
\begin{equation*}
\H^{k}_{d},\ \ \ \H^{p,q}_{\del},\ \ \ \H^{p,q}_{\delbar},\ \ \  \H^{p,q}_{BC},\ \ \ \H^{p,q}_{A},
\end{equation*}
defined as the spaces of forms which are in the kernel of the associated Laplacians.
All these Laplacians are elliptic operators on the almost Hermitian manifold $(X,J,\omega)$ (cf. \cite{hirzebruch}, \cite{PT4}), implying that all the spaces of harmonic forms are finite dimensional when the manifold is compact. Denote by
\begin{equation*}
b^{k},\ \ \ h^{p,q}_{\del},\ \ \ h^{p,q}_{\delbar},\ \ \ h^{p,q}_{BC},\ \ \ h^{p,q}_{A}
\end{equation*}
respectively the real dimension of $\H^k_d$ and the complex dimensions of $\H^{p,q}_{\del}$, $\H^{p,q}_{\delbar}$, $\H^{p,q}_{BC}$, $\H^{p,q}_{A}$.

\begin{rem}
By \eqref{bc-a-duality}, note that 
\begin{equation}\label{eq-star-bc-ae}
*\H^{p,q}_{BC}=\H^{n-q,n-p}_{A},\ \ \ *\H^{p,q}_{A}=\H^{n-q,n-p}_{BC}.
\end{equation}
In the following, it will be often useful to study the spaces $\H^{p,q}_{BC}$ and $\H^{p,q}_{A}$ for $p+q\le n$ in order to obtain information respectively on the spaces $\H^{n-q,n-p}_{A}$ and $\H^{n-q,n-p}_{BC}$.
\end{rem}

\begin{rem}
We observe that, since the operators $\del$ and $\delbar$ do not anticommute in the non integrable setting, we could have made a different choice for the Bott-Chern and Aeppli Laplacians, namely we could have taken
$$
\Delta_{BC,2}=\delbar\del\del^*\delbar^*+
\del^*\delbar^*\delbar\del+\delbar^*\del\del^*\delbar+\del^*\delbar\delbar^*\del
+\delbar^*\delbar+\del^*\del
$$
$$
\Delta_{A,2}=\delbar\del\del^*\delbar^*+
\del^*\delbar^*\delbar\del+
\delbar\del^*\del\delbar^*+\del\delbar^*\delbar\del^*+
\delbar\delbar^*+\del\del^*
$$
However, we notice that they differ by conjugation, namely
$$
\overline\Delta_{BC}=\Delta_{BC,2}\,,\qquad
\overline\Delta_{A}=\Delta_{A,2}.
$$
Hence,
$$
\alpha\in\Ker\Delta_{BC}\qquad\iff\qquad
\overline\alpha\in\Ker\Delta_{BC,2},
$$
therefore it is not restrictive to study only the space $\mathcal{H}^{\bullet,\bullet}_{BC}(X)$. A similar argument shows that it is not restrictive to study only the space $\mathcal{H}^{\bullet,\bullet}_{A}(X)$. 
\end{rem}

\section{Primitive decompositions of Bott-Chern harmonic forms}\label{sec-prim-dec}

In the following we are going to show that, in special bidegrees, we have natural primitive decompositions for Bott-Chern harmonic forms on compact almost K\"ahler manifolds. We first need to introduce some notations and recall some well known facts about primitive forms.
Let $(X,J,\omega)$ be a $2n$-dimensional almost Hermitian manifold. 
We denote with
$$
L:\Lambda^kX\to\Lambda^{k+2}X\,,\quad \alpha\mapsto\omega\wedge\alpha
$$
the Lefschetz operator and with
$$
\Lambda:\Lambda^kX\to\Lambda^{k-2}X\,,\quad \Lambda=-*L*
$$
its dual.
A differential $k$-form $\alpha_k$ on $X$, for $k\leq n$, is said to be {\em primitive} if $\Lambda\alpha_k=0$, or equivalently $L^{n-k+1}\alpha_k=0$. Then, the following vector bundle decomposition holds (see e.g., \cite[p. 26, Th\'eor\`eme 3]{weil})
\begin{equation}\label{eq-prim-dec-forms}
\Lambda^kX=\bigoplus_{r\geq\max(k-n,0)}L^r(P^{k-2r}X),
\end{equation}
where we denoted
$$
P^{s}X:=\ker\big(\Lambda:\Lambda^{s}X\to\Lambda^{s-2}X\big)
$$
the bundle of primitive $s$-forms. Accordingly to such decomposition, given any $k$-form $\alpha_k\in\Lambda^kX$, we can write 
\begin{equation}\label{primitive-bundle-decomposition}
\alpha_k=\sum_{r\geq\max(k-n,0)}\frac{1}{r!}L^r\beta_{k-2r},
\end{equation}
where $\beta_{k-2r}\in P^{k-2r}X$, namely
$$\Lambda\beta_{k-2r} =0,
$$ 
or equivalently 
$$
L^{n-k+2r+1}\beta_{k-2r}=0.
$$
Furthermore, the decomposition above is compatible with the bidegree decomposition on the bundle of complex $k$-forms $\Lambda_\C^kX$ induced by $J$, that is 
$$
P_\C^kX=\bigoplus_{p+q=k}P^{p,q}X,
$$
where 
$$
P^{p,q}X=P^k_\C X\cap\Lambda^{p,q}X.
$$
For any given $\beta_k\in P^kX$, we have the following formula (cf. \cite[p. 23, Th\'eor\`eme 2]{weil})
\begin{equation}\label{*-primitive}
*L^r\beta_k=(-1)^{\frac{k(k+1)}{2}}\frac{r!}{(n-k-r)!}L^{n-k-r}J\beta_k.
\end{equation}

Let us set $P^s:=\Gamma(X,P^sX)$ and $P^{p,q}:=\Gamma(X,P^{p,q}X)$.
We recall that the map $L^h:\Lambda^kX\to\Lambda^{k+2h}X$ is injective for $h+k\le n$ and is surjective for $h+k\ge n$.

\begin{rem}
In \cite[Corollary 5.4]{cirici-wilson-2} it is proven that on a $2n$-dimensional compact almost K\"ahler manifold such primitive decompositions pass to $d$-harmonic forms, namely
$$
\mathcal{H}_d^{p,q}=\bigoplus_{r\geq\max(p+q-n,0)}L^r(
\mathcal{H}_d^{p-r,q-r}\cap P^{p-r,q-r}).
$$
In fact, this holds true also for the spaces of harmonic forms introduced in \cite{TT0}. More precisely, setting
$$
\bar\delta:=\delbar+\mu,\qquad
\delta:=\del+\bar\mu
$$
one has, on compact almost K\"ahler manifolds, for every $p,q$ (\cite[Proposition 6.2, Theorem 6.7]{TT0})
$$
\mathcal{H}_d^{p,q}=\mathcal{H}_{\bar\delta}^{p,q}=
\mathcal{H}_{\delta}^{p,q}\,.
$$
\end{rem}

Here we are interested in investigating when the decomposition \eqref{eq-prim-dec-forms} descends to Bott-Chern and Aeppli harmonic forms. Note that, since $(p,0)$-forms and $(0,q)$-forms are trivially primitive, we immediately derive for $p,q\leq n$
\begin{align*}
\H^{p,0}_{BC}&=\H^{p,0}_{BC}\cap P^{p,0},\ \ \ &\H^{0,q}_{BC}&=\H^{0,q}_{BC}\cap P^{0,q},\\
\H^{p,0}_{A}&=\H^{p,0}_{A}\cap P^{p,0},\ \ \ &\H^{0,q}_{A}&=\H^{0,q}_{A}\cap P^{0,q}.
\end{align*}
Applying the Hodge $*$ operator to the previous trivial primitive decompositions of the spaces of Bott-Chern and Aeppli harmonic forms, from \eqref{eq-star-bc-ae} and \eqref{*-primitive} we easily obtain respectively
\begin{align*}
\H^{n,n-p}_{A}&=L^{n-p}\left(\H^{p,0}_{BC}\cap P^{p,0}\right),\ \ \ &\H^{n-q,n}_{A}&=L^{n-q}\left(\H^{0,q}_{BC}\cap P^{0,q}\right),\\
\H^{n,n-p}_{BC}&=L^{n-p}\left(\H^{p,0}_{A}\cap P^{p,0}\right),\ \ \ &\H^{n-q,n}_{BC}&=L^{n-q}\left(\H^{0,q}_{A}\cap P^{0,q}\right).
\end{align*}
In particular, taking $p=q=n$ we obtain
$$
\H^{n,0}_{BC}=\H^{n,0}_{A}\qquad\text{and}\qquad
\H^{0,n}_{BC}=\H^{0,n}_{A}\,.
$$
In fact, this can be obtained directly using formula (\ref{*-primitive}) as done in Proposition \ref{prop:equalities-deg-n} recalling that $(n,0)$-forms and $(0,n)$-forms are trivially primitive. 
We find more interesting primitive decompositions when we look at the space of Bott-Chern and Aeppli harmonic forms of bidegree $(1,1)$.

\begin{theorem}\label{thm:decomp-bc}
Let $(X,J,\omega)$ be a compact almost K\"ahler manifold of dimension $2n$. Then,
\[
\mathcal{H}^{1,1}_{BC}=\C\,\omega\oplus \left(\mathcal{H}^{1,1}_{BC}\cap P^{1,1}\right)\,.
\]
\end{theorem}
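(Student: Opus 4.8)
The plan is to take an arbitrary $\alpha\in\mathcal{H}^{1,1}_{BC}$, write its primitive decomposition $\alpha=f\omega+\alpha_0$ with $f\in\cinf(X,\C)$ and $\alpha_0\in P^{1,1}$ (applying $\Lambda$ gives $f=\frac1n\Lambda\alpha$), and reduce the theorem to the single claim that $f$ is constant. Indeed, first note $\C\omega\subseteq\mathcal{H}^{1,1}_{BC}$: since $d\omega=0$ we have $\del\omega=\delbar\omega=0$, and $*\omega=\frac{1}{(n-1)!}\omega^{n-1}$ by \eqref{*-primitive}, whence $\del\delbar*\omega=\frac{1}{(n-1)!}\del\delbar(\omega^{n-1})=0$; so $\omega$, and by linearity every $c\omega$, lies in $\mathcal{H}^{1,1}_{BC}$. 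The sum is direct because $\Lambda\omega=n\neq0$, so no nonzero multiple of $\omega$ is primitive. Hence, once $f$ is shown to be constant, we obtain $f\omega\in\C\omega$ and $\alpha_0=\alpha-f\omega\in\mathcal{H}^{1,1}_{BC}\cap P^{1,1}$ (a difference of Bott--Chern harmonic forms that is primitive by construction), which is exactly the asserted decomposition.

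To prove $f$ constant I would use all three conditions characterising $\mathcal{H}^{1,1}_{BC}$, namely $\del\alpha=0$, $\delbar\alpha=0$ and $\del\delbar*\alpha=0$. From $\del\alpha=0$ together with $\del\omega=0$ one gets $\del\alpha_0=-(\del f)\wedge\omega$, and likewise $\delbar\alpha_0=-(\delbar f)\wedge\omega$. Next I compute the Hodge star via \eqref{*-primitive} (using $J\alpha_0=\alpha_0$ on $(1,1)$-forms), obtaining $*\alpha=\frac{f}{(n-1)!}\omega^{n-1}-\frac{1}{(n-2)!}\omega^{n-2}\wedge\alpha_0$. Applying $\delbar$, using $\delbar\omega=0$ and substituting $\delbar\alpha_0=-(\delbar f)\wedge\omega$, the two resulting terms combine to $\delbar*\alpha=\frac{n}{(n-1)!}(\delbar f)\wedge\omega^{n-1}$. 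Applying $\del$ and using $\del\omega=0$ then gives $\del\delbar*\alpha=\frac{n}{(n-1)!}(\del\delbar f)\wedge\omega^{n-1}$, so the third condition reads $(\del\delbar f)\wedge\omega^{n-1}=L^{n-1}(\del\delbar f)=0$. As $\del\delbar f$ is a $(1,1)$-form, this is equivalent to $\del\delbar f$ being primitive, i.e. $\Lambda\del\delbar f=0$.

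It remains to deduce that $f$ is constant from $\Lambda\del\delbar f=0$, and this is where the almost K\"ahler hypothesis enters decisively: I would invoke the almost K\"ahler identities $[\Lambda,\del]=i\delbar^*$ and $[\Lambda,\delbar]=-i\del^*$, which hold on almost K\"ahler manifolds by the work of Cirici and Wilson. Since $\Lambda$ annihilates $1$-forms, the first identity gives $\Lambda\del\delbar f=i\delbar^*\delbar f=i\Delta_{\delbar}f$, so $\Delta_{\delbar}f=0$ and hence $\delbar f=0$ on the compact manifold $X$. Then $\del\delbar f=0$, and the structure equation $\del\delbar+\delbar\del+\mu\c\mu+\c\mu\mu=0$ evaluated on the function $f$ (where $\mu f=\c\mu f=0$) yields $\delbar\del f=0$; the second identity then gives $0=\Lambda\delbar\del f=-i\del^*\del f=-i\Delta_{\del}f$, so $\del f=0$ as well. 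Therefore $df=\del f+\delbar f=0$ and $f$ is constant, completing the argument. The main obstacle is precisely this last step: converting the pointwise trace equation $\Lambda\del\delbar f=0$ into the constancy of $f$, which relies on the nontrivial fact that the first-order K\"ahler identities survive in the non-integrable almost K\"ahler setting (and on handling a possibly complex-valued $f$, which forces me to kill $\del f$ and $\delbar f$ separately rather than appealing to a real maximum principle).
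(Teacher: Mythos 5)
Your proposal is correct, and up to the key analytic step it coincides with the paper's argument: both write $\alpha=f\omega+\gamma$ with $\gamma$ primitive, compute $*\alpha=\frac{f}{(n-1)!}\omega^{n-1}-\frac{1}{(n-2)!}\omega^{n-2}\wedge\gamma$ from \eqref{*-primitive}, substitute $\delbar\gamma=-\delbar f\wedge\omega$ into the condition $\del\delbar*\alpha=0$, and reduce everything to $\omega^{n-1}\wedge\del\delbar f=0$. Where you genuinely diverge is in deducing that $f$ is constant. The paper treats $f\mapsto -i*(\del\delbar f\wedge\omega^{n-1})$ as a strongly elliptic second-order operator with no zeroth-order term and invokes the maximum principle (following \cite{PT4} and \cite{TT}). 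You instead read $\omega^{n-1}\wedge\del\delbar f=0$ as $\Lambda\del\delbar f=0$ and apply the almost K\"ahler identities $[\Lambda,\del]=i\delbar^*$, $[\Lambda,\delbar]=-i\del^*$ of Cirici--Wilson to get $\Delta_{\delbar}f=0$, hence $\delbar f=0$ by integration by parts on the compact manifold, and then $\del f=0$ via $\del\delbar+\delbar\del=0$ on functions and the second identity. Your route buys an elementary $L^2$ argument in place of the maximum principle and makes transparent why the almost K\"ahler hypothesis (rather than mere almost Hermitian-ness) is what matters; the paper's route is more self-contained in that it only needs ellipticity of a scalar operator rather than the full first-order K\"ahler identities in the non-integrable setting. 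Your handling of the endgame (directness of the sum via $\Lambda\omega=n$, $\C\omega\subseteq\H^{1,1}_{BC}$, and $\gamma=\alpha-f\omega\in\H^{1,1}_{BC}\cap P^{1,1}$ by linearity) is also fine and slightly cleaner than the paper's explicit verification that $\del\gamma=\delbar\gamma=0$ and $\Lambda\gamma=0$ imply $\del\delbar*\gamma=0$.
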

\begin{proof}
Let $\psi\in\H^{1,1}_{BC}$, i.e., $\psi\in A^{1,1}$ and
\begin{equation}\label{charact-psi}
\del\psi=0,\ \delbar\psi=0,\ \del\delbar*\psi=0.
\end{equation}
By \eqref{primitive-bundle-decomposition}, we derive
\[
\psi=f\omega+\gamma,
\]
where $f$ is a smooth function with complex values on $X$, and $\gamma$ is a primitive $(1,1)$-form, i.e., $\Lambda\gamma=0$. Since both $f$ and $\gamma$ are primitive forms, we apply \eqref{*-primitive} to compute $*\psi$. We obtain
\[
*\psi=\frac{\omega^{n-1}}{(n-1)!}f-\frac{\omega^{n-2}}{(n-2)!}\wedge\gamma.
\]
Now, from \eqref{charact-psi} and from the assumption that the metric is almost K\"ahler, it follows that
\begin{align*}
0&=\del\psi=\del f\wedge\omega+\del\gamma,\\
0&=\delbar\psi=\delbar f\wedge \omega+\delbar\gamma,\\
0&=\frac{\omega^{n-1}}{(n-1)!}\wedge\del\delbar f-\frac{\omega^{n-2}}{(n-2)!}\wedge\del\delbar\gamma\\
&=\frac{\omega^{n-1}}{(n-1)!}\wedge\del\delbar f-\frac{\omega^{n-2}}{(n-2)!}\wedge\del(-\delbar f\wedge \omega)\\
&=\frac{\omega^{n-1}}{(n-1)!}\wedge\del\delbar f+\frac{\omega^{n-1}}{(n-2)!}\wedge\del\delbar f\\
&=\left(\frac1{(n-1)!}+\frac1{(n-2)!}\right)\omega^{n-1}\wedge\del\delbar f.
\end{align*}
Arguing like in \cite[Theorem 4.3]{PT4} or in \cite[Proposition 3.4]{TT}, one can show that the differential operator $L:\cinf(X,\C)\to\cinf(X,\C)$ defined by
\[
L:f\mapsto -i*(\del\delbar f\wedge \omega^{n-1})
\]
is strongly elliptic and, being $f\in\Ker L$, it follows that $f$ is a complex constant by the maximum principle.
Since $f\in\C$, the equations in \eqref{charact-psi} are equivalent to
\[
\del\gamma=0,\ \delbar\gamma=0.
\]
Note that $\del\gamma=\delbar\gamma=0$ and $\Lambda\gamma=0$ implies $\del\delbar*\gamma=0$.
Summing up, we showed that if $\psi=f\omega+\gamma\in\H^{1,1}_{BC}$, where $f\in\cinf(X,\C)$ and $\gamma\in P^{1,1}$, then $f\in\C$ and $\gamma\in\H^{1,1}_{BC}$ proving the inclusion $\subseteq$ of the statement. The converse inclusion $\supseteq$ is trivial, therefore the theorem is proved.
\end{proof}

For Aeppli harmonic $(1,1)$-forms, we find the following similar decomposition.

\begin{theorem}\label{thm:decomp-ae}
Let $(X,J,\omega)$ be a compact almost K\"ahler manifold of dimension $2n$. Then,
\[
\mathcal{H}^{1,1}_{A}=\C\,\omega\oplus \left(\mathcal{H}^{1,1}_{A}\cap P^{1,1}\right)\,.
\]
\end{theorem}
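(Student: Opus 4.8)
The plan is to run the argument of Theorem~\ref{thm:decomp-bc} with the Bott-Chern conditions replaced by their Aeppli counterparts. A form $\psi\in A^{1,1}$ lies in $\H^{1,1}_A$ precisely when
\[
\del^*\psi=0,\qquad \delbar^*\psi=0,\qquad \del\delbar\psi=0.
\]
Using \eqref{primitive-bundle-decomposition} I would again write $\psi=f\omega+\gamma$ with $f\in\cinf(X,\C)$ and $\gamma\in P^{1,1}$, and reuse verbatim the computation of $*\psi$ from the previous proof, namely $*\psi=\frac{1}{(n-1)!}f\,\omega^{n-1}-\frac{1}{(n-2)!}\omega^{n-2}\wedge\gamma$. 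Since $\del^*=-*\delbar*$, $\delbar^*=-*\del*$ and $*$ is invertible, the two adjoint conditions are equivalent to $\delbar*\psi=0$ and $\del*\psi=0$. Expanding $\delbar*\psi=0$ and using that the metric is almost K\"ahler, so that $\del\omega=\delbar\omega=0$ and hence $\del\omega^{k}=\delbar\omega^{k}=0$, yields
\[
\frac{1}{(n-1)!}\delbar f\wedge\omega^{n-1}=\frac{1}{(n-2)!}\omega^{n-2}\wedge\delbar\gamma.
\]

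The crux, exactly as in the Bott-Chern case, is to distil a single strongly elliptic scalar equation for $f$. I would apply $\del$ to the displayed identity; because $\del\omega^{n-1}=\del\omega^{n-2}=0$ this gives $\frac{1}{(n-1)!}\del\delbar f\wedge\omega^{n-1}=\frac{1}{(n-2)!}\omega^{n-2}\wedge\del\delbar\gamma$. The third Aeppli condition $\del\delbar\psi=0$ unwinds, via $\del\delbar\omega=0$, to $\del\delbar f\wedge\omega+\del\delbar\gamma=0$, that is $\del\delbar\gamma=-\del\delbar f\wedge\omega$. Substituting this into the previous line collapses everything to
\[
\left(\frac{1}{(n-1)!}+\frac{1}{(n-2)!}\right)\omega^{n-1}\wedge\del\delbar f=0 .
\]
Thus $f$ lies in the kernel of the strongly elliptic operator $L\colon f\mapsto -i*(\del\delbar f\wedge\omega^{n-1})$ and is therefore a complex constant by the maximum principle, just as in Theorem~\ref{thm:decomp-bc}.

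Once $f\in\C$ is constant, $\del\delbar f=0$ forces $\del\delbar\gamma=0$, while $\del^*\omega=\delbar^*\omega=0$ reduces the first two Aeppli conditions on $\psi$ to $\del^*\gamma=\delbar^*\gamma=0$; hence $\gamma\in\H^{1,1}_A\cap P^{1,1}$, which establishes the inclusion $\subseteq$. The reverse inclusion is immediate since $\omega\in\H^{1,1}_A$ and $\omega$ does not lie in $P^{1,1}$, so the sum is direct. I expect the only real obstacle to be bookkeeping: arranging the almost K\"ahler identities so that the single adjoint condition $\delbar*\psi=0$ and $\del\delbar\psi=0$ combine into the lone elliptic equation for $f$; no genuinely new idea beyond the Bott-Chern proof is required. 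One could alternatively try to transfer the statement through the Hodge-$*$ duality \eqref{eq-star-bc-ae}, but since $*$ interchanges bidegrees $(1,1)$ and $(n-1,n-1)$ this would route through the $(n-1,n-1)$ Bott-Chern decomposition of Theorem~\ref{thm:decomp-bc-2}, so the self-contained direct argument above is preferable.
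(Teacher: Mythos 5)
Your proposal is correct and follows essentially the same route as the paper: the same primitive splitting $\psi=f\omega+\gamma$, the same computation of $*\psi$, the same combination of $\del$ applied to the $\delbar*\psi=0$ equation with the $\del\delbar\psi=0$ equation to obtain $\bigl(\tfrac{1}{(n-1)!}+\tfrac{1}{(n-2)!}\bigr)\omega^{n-1}\wedge\del\delbar f=0$, and the same elliptic/maximum-principle argument forcing $f$ to be constant. The only cosmetic difference is that the paper applies $L^{n-2}$ to $\del\delbar\psi=0$ before combining, whereas you substitute $\del\delbar\gamma=-\del\delbar f\wedge\omega$ directly; these are the same computation.
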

\begin{proof}
Let $\psi\in\H^{1,1}_{A}$, i.e., $\psi\in A^{1,1}$ and
\begin{equation}\label{charact-psi-ae}
\del*\psi=0,\ \delbar*\psi=0,\ \del\delbar\psi=0.
\end{equation}
By \eqref{primitive-bundle-decomposition}, we derive
\[
\psi=f\omega+\gamma,
\]
where $f$ is a smooth function with complex values on $X$, and $\gamma$ is a primitive $(1,1)$-form, i.e., $\Lambda\gamma=0$. Since both $f$ and $\gamma$ are primitive forms, we apply \eqref{*-primitive} to compute $*\psi$. We obtain
\[
*\psi=\frac{\omega^{n-1}}{(n-1)!}f-\frac{\omega^{n-2}}{(n-2)!}\wedge\gamma.
\]
Now, from \eqref{charact-psi} and from the assumption that the metric is almost K\"ahler, it follows that
\begin{align}
0&=\del*\psi=\frac{\omega^{n-1}}{(n-1)!}\wedge\del f-\frac{\omega^{n-2}}{(n-2)!}\wedge\del\gamma,\notag\\
0&=\delbar*\psi=\frac{\omega^{n-1}}{(n-1)!}\wedge\delbar f-\frac{\omega^{n-2}}{(n-2)!}\wedge\delbar\gamma,\label{eq2}\\
0&=\del\delbar f\wedge\omega+\del\delbar\gamma.\label{eq3}
\end{align}
We apply $L^{n-2}$ to \eqref{eq3}, obtaining
\begin{align*}
0&=\omega^{n-1}\wedge\del\delbar f+\omega^{n-2}\wedge\del\delbar\gamma.
\end{align*}
We apply $\del$ to \eqref{eq2}, deriving
\begin{align*}
0&=\frac{\omega^{n-1}}{(n-1)!}\wedge\del\delbar f-\frac{\omega^{n-2}}{(n-2)!}\wedge\del\delbar\gamma.
\end{align*}
Combining the last two equations, we find
\begin{align*}
0&=\frac{\omega^{n-1}}{(n-1)!}\wedge\del\delbar f+\frac{\omega^{n-1}}{(n-2)!}\wedge\del\delbar f\\
&=\left(\frac1{(n-1)!}+\frac1{(n-2)!}\right)\omega^{n-1}\wedge\del\delbar f.
\end{align*}
Arguing like in Theorem \ref{thm:decomp-bc}, it follows that $f$ is a complex constant.
Since $f\in\C$, the equations in \eqref{charact-psi-ae} are equivalent to
\[
\del*\gamma=0,\ \delbar*\gamma=0,\ \del\delbar\gamma=0.
\]
Summing up, we showed that if $\psi=f\omega+\gamma\in\H^{1,1}_{A}$, where $f\in\cinf(X,\C)$ and $\gamma\in P^{1,1}$, then $f\in\C$ and $\gamma\in\H^{1,1}_{A}$ proving the inclusion $\subseteq$ of the statement. The converse inclusion $\supseteq$ is trivial, therefore the theorem is proved.
\end{proof}

As a corollary of the previous results we obtain also the following decompositions of the spaces of $(n-1,n-1)$ Bott-Chern and Aeppli harmonic forms.
\begin{theorem}\label{thm:decomp-bc-2}
Let $(X,J,\omega)$ be a compact almost K\"ahler manifold of dimension $2n$. Then, the following decompositions hold
\begin{equation}\label{dec-bc-n-1}
\mathcal{H}^{n-1,n-1}_{BC}=\C\,\omega^{n-1}
\oplus L^{n-2}\left(\mathcal{H}^{1,1}_{A}\cap P^{1,1}\right)\,,
\end{equation}
\begin{equation}\label{dec-ae-n-1}
\mathcal{H}^{n-1,n-1}_{A}=\C\,\omega^{n-1}
\oplus L^{n-2}\left(\mathcal{H}^{1,1}_{BC}\cap P^{1,1}\right)\,.
\end{equation}
\end{theorem}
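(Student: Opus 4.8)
The plan is to deduce both decompositions from the $(1,1)$ case already established in Theorems \ref{thm:decomp-bc} and \ref{thm:decomp-ae}, by applying the Hodge star operator and exploiting the Bott-Chern/Aeppli duality \eqref{eq-star-bc-ae}. Specialising the identities \eqref{eq-star-bc-ae} to $(p,q)=(1,1)$ gives $*\H^{1,1}_{A}=\H^{n-1,n-1}_{BC}$ and $*\H^{1,1}_{BC}=\H^{n-1,n-1}_{A}$, so it suffices to apply $*$ to the right-hand sides of Theorems \ref{thm:decomp-ae} and \ref{thm:decomp-bc} and to identify the images of the two summands.

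First I would compute $*\omega$. Writing $\omega=L(1)$ with the constant function $1$ viewed as a primitive $0$-form, formula \eqref{*-primitive} with $k=0$ and $r=1$ gives $*\omega=\frac{1}{(n-1)!}\,\omega^{n-1}$, hence $*(\C\,\omega)=\C\,\omega^{n-1}$. Next, for a primitive $(1,1)$-form $\gamma$ I would apply \eqref{*-primitive} with $k=2$ and $r=0$; since $J$ acts on $(1,1)$-forms as multiplication by $i^{p-q}=1$, that is as the identity, this yields $*\gamma=-\frac{1}{(n-2)!}\,L^{n-2}\gamma$. Consequently $*(\H^{1,1}_{A}\cap P^{1,1})=L^{n-2}(\H^{1,1}_{A}\cap P^{1,1})$, and likewise $*(\H^{1,1}_{BC}\cap P^{1,1})=L^{n-2}(\H^{1,1}_{BC}\cap P^{1,1})$, the scalar factors being absorbed since these are $\C$-vector spaces.

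Combining these computations with Theorem \ref{thm:decomp-ae} gives
\[
\H^{n-1,n-1}_{BC}=*\H^{1,1}_{A}=*(\C\,\omega)\oplus*(\H^{1,1}_{A}\cap P^{1,1})=\C\,\omega^{n-1}\oplus L^{n-2}(\H^{1,1}_{A}\cap P^{1,1}),
\]
which is \eqref{dec-bc-n-1}; applying the same argument to Theorem \ref{thm:decomp-bc} yields \eqref{dec-ae-n-1}. The directness of the two sums is automatic: $*$ is a linear isomorphism and hence carries the direct sum decomposition of $\H^{1,1}_{A}$ (respectively $\H^{1,1}_{BC}$) to a direct sum.

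There is no genuine analytic difficulty here, as the whole argument is formal once the $(1,1)$ decompositions are in hand. The only point requiring care is the bookkeeping in \eqref{*-primitive}, namely the sign $(-1)^{k(k+1)/2}$ and the factorial ratio $\frac{r!}{(n-k-r)!}$, together with the identification $J\gamma=\gamma$ for primitive $(1,1)$-forms. Once these constants are pinned down, the statement follows immediately from the duality \eqref{eq-star-bc-ae} and the previously established $(1,1)$ decompositions.
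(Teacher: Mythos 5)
Your proposal is correct and follows exactly the paper's own (very brief) argument: apply the Hodge star to the $(1,1)$ decompositions of Theorems \ref{thm:decomp-bc} and \ref{thm:decomp-ae}, using \eqref{eq-star-bc-ae} to identify the ambient spaces and \eqref{*-primitive} to compute $*\omega=\frac{\omega^{n-1}}{(n-1)!}$ and $*\gamma=-\frac{1}{(n-2)!}L^{n-2}\gamma$ for primitive $(1,1)$-forms. The constants and the preservation of directness under the linear isomorphism $*$ are handled correctly, so nothing is missing.
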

\begin{proof}
Decompositions \eqref{dec-bc-n-1} and \eqref{dec-ae-n-1} follow respectively from the decompositions of Theorems \ref{thm:decomp-ae} and \ref{thm:decomp-bc} applying the Hodge star operator and using formulae \eqref{*-primitive} and \eqref{eq-star-bc-ae}.
\end{proof}

From Theorems \ref{thm:decomp-bc}, \ref{thm:decomp-ae} and \ref{thm:decomp-bc-2}, it immediately follows

\begin{cor}\label{cor-11}
Let $(X,J,\omega)$ be a compact almost K\"ahler manifold of dimension $4$. Then,
\[
\mathcal{H}^{1,1}_{BC}=\mathcal{H}^{1,1}_{A}=\C\,\omega
\oplus \left(\mathcal{H}^{1,1}_{BC}\cap P^{1,1}\right).
\]
\end{cor}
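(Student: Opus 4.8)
The plan is to specialize the four decompositions in Theorems \ref{thm:decomp-bc}, \ref{thm:decomp-ae} and \ref{thm:decomp-bc-2} to real dimension $4$, i.e.\ $n=2$, and then to compare them. The crucial observation is that for $n=2$ one has $n-1=1$, so the bidegree $(n-1,n-1)$ appearing in Theorem \ref{thm:decomp-bc-2} is exactly $(1,1)$, and the operator $L^{n-2}=L^0=\id$ is the identity. Hence all four statements concern subspaces of the single space $A^{1,1}$, and Theorem \ref{thm:decomp-bc-2} becomes a pair of \emph{cross-relations} interchanging the Bott-Chern and Aeppli primitive $(1,1)$-pieces.

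More explicitly, I would juxtapose the two available expressions for $\mathcal{H}^{1,1}_{BC}$: Theorem \ref{thm:decomp-bc} gives $\mathcal{H}^{1,1}_{BC}=\C\,\omega\oplus(\mathcal{H}^{1,1}_{BC}\cap P^{1,1})$, whereas Theorem \ref{thm:decomp-bc-2} with $n=2$ gives $\mathcal{H}^{1,1}_{BC}=\C\,\omega\oplus(\mathcal{H}^{1,1}_{A}\cap P^{1,1})$. Since both are direct-sum decompositions sharing the same first summand $\C\,\omega$, I can read off $\mathcal{H}^{1,1}_{BC}\cap P^{1,1}=\mathcal{H}^{1,1}_{A}\cap P^{1,1}$. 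The same equality of primitive parts also drops out of comparing the two expressions for $\mathcal{H}^{1,1}_{A}$ coming from Theorem \ref{thm:decomp-ae} and the other cross-relation, which provides a consistency check.

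Feeding this equality of primitive parts back into either decomposition then finishes the argument: one obtains $\mathcal{H}^{1,1}_{BC}=\C\,\omega\oplus(\mathcal{H}^{1,1}_{BC}\cap P^{1,1})=\C\,\omega\oplus(\mathcal{H}^{1,1}_{A}\cap P^{1,1})=\mathcal{H}^{1,1}_{A}$, which is exactly the asserted equality together with the displayed common decomposition. I expect no genuine obstacle: the whole content is already packaged in the preceding theorems, and the only thing requiring care is the low-dimensional bookkeeping ($L^{n-2}=\id$ and $(n-1,n-1)=(1,1)$ at $n=2$) that collapses the $(n-1,n-1)$-decompositions into statements about $(1,1)$-forms and thereby makes the Bott-Chern/Aeppli interchange visible.
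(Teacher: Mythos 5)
Your proposal is correct and is exactly the paper's (unwritten) argument: the corollary is stated as an immediate consequence of Theorems \ref{thm:decomp-bc}, \ref{thm:decomp-ae} and \ref{thm:decomp-bc-2} specialized to $n=2$, where $L^{n-2}=\id$ and $(n-1,n-1)=(1,1)$. The only step worth making explicit is that cancelling the common summand $\C\,\omega$ to deduce $\mathcal{H}^{1,1}_{BC}\cap P^{1,1}=\mathcal{H}^{1,1}_{A}\cap P^{1,1}$ is legitimate not merely because the first summands agree, but because both complements lie in $P^{1,1}$, which intersects $\C\,\omega$ trivially (and in fact the stated corollary already follows from Theorem \ref{thm:decomp-bc} together with the second identity of Theorem \ref{thm:decomp-bc-2} alone).
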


Combining Corollary \ref{cor-11} with \cite[Proposition 6.1]{HZ} and \cite[Corollary 4.4]{PT4}, we derive

\begin{cor}\label{cor-11-harmonic}
Let $(X,J,\omega)$ be a compact almost K\"ahler manifold of dimension $4$. Then,
\[
\mathcal{H}^{1,1}_{d}=\mathcal{H}^{1,1}_{\del}=\mathcal{H}^{1,1}_{\delbar}=\mathcal{H}^{1,1}_{BC}=\mathcal{H}^{1,1}_{A}.
\]
\end{cor}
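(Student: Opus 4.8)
The plan is to reduce the asserted equality of the five spaces to the equality of their \emph{primitive} parts, and then to exploit the fact that in real dimension $4$ a primitive $(1,1)$-form is anti-self-dual. First I would record that each of the five spaces carries a Lefschetz decomposition of the form $\mathcal{H}^{1,1}_{\bullet}=\C\,\omega\oplus(\mathcal{H}^{1,1}_{\bullet}\cap P^{1,1})$: for $BC$ and $A$ this is exactly Corollary \ref{cor-11}; for $\delbar$ it is \cite[Proposition 6.1]{HZ}; for $\del$ it follows from the $\delbar$ case by conjugation, since the bidegree $(1,1)$ and the real form $\omega$ are conjugation invariant; and for $d$ it is the $(1,1)$ instance of \eqref{eq-decomp-cirici}, using that $\mathcal{H}^{0,0}_{d}=\C$ on a compact connected manifold so that the term $L(\mathcal{H}^{0,0}_{d}\cap P^{0,0})$ is precisely $\C\,\omega$. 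Since $d\omega=0$ and $*\omega=\omega$ force $\omega$ to lie in all five spaces, and since $\Lambda^{1,1}X=\C\,\omega\oplus P^{1,1}X$ is a \emph{direct} sum, it suffices to prove that the five primitive parts $\mathcal{H}^{1,1}_{\bullet}\cap P^{1,1}$ coincide.

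The key step is the computation on primitive $(1,1)$-forms. By \eqref{*-primitive} with $n=2$, any $\gamma\in P^{1,1}$ satisfies $*\gamma=-\gamma$. Substituting this into the harmonicity characterizations collapses all five to the single pair of conditions $\del\gamma=0$ and $\delbar\gamma=0$. Concretely: for $\mathcal{H}^{1,1}_{\delbar}$ the condition $\del*\gamma=0$ becomes $\del\gamma=0$; for $\mathcal{H}^{1,1}_{\del}$ the condition $\delbar*\gamma=0$ becomes $\delbar\gamma=0$; for $\mathcal{H}^{1,1}_{d}$ the vanishing of $d\gamma=\mu\gamma+\del\gamma+\delbar\gamma+\c\mu\gamma$ reduces to $\del\gamma=\delbar\gamma=0$, because $\mu\gamma$ and $\c\mu\gamma$ have bidegrees $(3,0)$ and $(0,3)$, which vanish on a $4$-manifold, and $d*\gamma=-d\gamma$; and for $\mathcal{H}^{1,1}_{BC}$, $\mathcal{H}^{1,1}_{A}$ the third-order conditions $\del\delbar*\gamma$ and $\del\delbar\gamma$ are automatically satisfied once $\del\gamma=\delbar\gamma=0$, exactly as observed in the proof of Theorem \ref{thm:decomp-bc}. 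Hence
\[
\mathcal{H}^{1,1}_{d}\cap P^{1,1}=\mathcal{H}^{1,1}_{\del}\cap P^{1,1}=\mathcal{H}^{1,1}_{\delbar}\cap P^{1,1}=\mathcal{H}^{1,1}_{BC}\cap P^{1,1}=\mathcal{H}^{1,1}_{A}\cap P^{1,1},
\]
each equal to the space of primitive $(1,1)$-forms $\gamma$ with $\del\gamma=\delbar\gamma=0$.

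Combining the two steps, every $\mathcal{H}^{1,1}_{\bullet}$ equals $\C\,\omega$ plus this common primitive space, whence all five coincide. The second step is forced and essentially computational once the anti-self-duality $*\gamma=-\gamma$ is in hand, so I expect the only genuine care to be needed in the first step: assembling the five decompositions into a uniform statement. In particular I would verify that the conjugation argument legitimately transports the Dolbeault decomposition of \cite[Proposition 6.1]{HZ} to the $\del$-harmonic space, and that the identification of the Bott–Chern primitive part with the Dolbeault one in the lowest-dimensional case provided by \cite[Corollary 4.4]{PT4} is consistent with Corollary \ref{cor-11}; these are precisely the pieces the statement proposes to combine, and the anti-self-duality observation is what glues them together.
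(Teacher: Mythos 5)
Your proposal is correct and follows essentially the same route as the paper: the paper's proof is precisely to combine the decompositions $\mathcal{H}^{1,1}_{\bullet}=\C\,\omega\oplus(\mathcal{H}^{1,1}_{\bullet}\cap P^{1,1})$ coming from Corollary \ref{cor-11}, \cite[Proposition 6.1]{HZ} and \cite[Corollary 4.4]{PT4}, the identification of the primitive parts resting on the anti-self-duality $*\gamma=-\gamma$ of primitive $(1,1)$-forms in real dimension $4$ (the same computation that the paper later generalizes in Proposition \ref{prop:equalities-deg-n}). You merely unpack explicitly the content of the cited results (and add the conjugation argument for $\del$ and the $r=1$ term of \eqref{eq-decomp-cirici} for $d$), which the paper leaves to the references.
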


These last two results will be generalized in Proposition \ref{prop:equalities-deg-n}.

\begin{rem}
Notice that, in fact, on $4$-dimensional almost Hermitian manifolds the primitive decomposition of $\mathcal{H}^{1,1}_{BC}$ was proved in \cite[Corollary 4.4]{PT4}, where it is shown that
\[
\mathcal{H}^{1,1}_{BC}\cap P^{1,1}=\{\alpha\in A^{1,1}\,:\,\Delta_d\alpha=0,\ *\alpha=-\alpha\}.
\]
\end{rem}

\section{Relations among the spaces of primitive harmonic forms}\label{sec-relations}

We saw that the spaces of primitive Bott-Chern and Aeppli harmonic forms are important in Theorems \ref{thm:decomp-bc}, \ref{thm:decomp-ae} and \ref{thm:decomp-bc-2}. Moreover, the spaces of primitive $\del$- and Dolbeault harmonic forms play a similar role as shown in \cite{cattaneo-tardini-tomassini}. Let us then study the possible inclusions and non inclusions between these spaces.

\begin{prop}\label{prop:bc-in-delbar-in-aeppli}
Let $(X,J,\omega)$ be a compact almost K\"ahler manifold of dimension $2n$.
Then, for $p+q\leq n$,
\begin{gather}\label{eq-bc-del-delbar}
\mathcal{H}^{p,q}_{BC}\cap P^{p,q}=
\mathcal{H}^{p,q}_{\delbar}\cap \mathcal{H}^{p,q}_{\del}\cap P^{p,q},\\
\mathcal{H}^{p,q}_{\delbar}\cap P^{p,q}\subseteq 
\mathcal{H}^{p,q}_{A}\cap P^{p,q}.\label{eq-delbar-ae}
\end{gather}
In particular,
\begin{gather*}
\mathcal{H}^{p,q}_{BC}\cap P^{p,q}\subseteq
\mathcal{H}^{p,q}_{\delbar}\cap P^{p,q},\\
\mathcal{H}^{p,q}_{BC}\cap P^{p,q}\subseteq
\mathcal{H}^{p,q}_{\del}\cap P^{p,q},\\
\mathcal{H}^{p,q}_{BC}\cap P^{p,q}\subseteq 
\mathcal{H}^{p,q}_{A}\cap P^{p,q}.
\end{gather*}
\end{prop}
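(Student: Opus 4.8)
The plan is to reduce all four containments to a single computation relating the Hodge star to $\del$ and $\delbar$ on primitive forms, and then to read off each inclusion directly from the kernel characterizations of the harmonic spaces recalled in Section \ref{section:preliminaries}.

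First I would record the governing identity. Let $\alpha\in P^{p,q}$ with $k:=p+q\le n$. Formula \eqref{*-primitive} with $r=0$, together with the fact that the Weil operator $J$ acts on $(p,q)$-forms as a nonzero scalar (equal to $i^{p-q}$), gives
\[
*\alpha=c\,L^{n-k}\alpha \qquad\text{for some nonzero constant }c.
\]
Since $(X,J,\omega)$ is almost K\"ahler we have $d\omega=0$, hence $\del\omega=\delbar\omega=0$; because $\del$ and $\delbar$ satisfy the Leibniz rule on forms of pure bidegree, for $D\in\{\del,\delbar\}$ this yields $D{*}\alpha=c\,\omega^{n-k}\wedge D\alpha=c\,L^{n-k}(D\alpha)$. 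The only consequence I will need is the implication
\[
D\alpha=0 \ \Longrightarrow\ D{*}\alpha=0 \qquad (D\in\{\del,\delbar\}),
\]
valid for every primitive $\alpha$ on an almost K\"ahler manifold.

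Next I would prove the equality \eqref{eq-bc-del-delbar}. For the inclusion $\subseteq$, a primitive $BC$-harmonic $\alpha$ satisfies $\del\alpha=\delbar\alpha=0$; the displayed implication then gives $\del{*}\alpha=0$ and $\delbar{*}\alpha=0$, so $\alpha$ meets the characterizations of both $\mathcal{H}^{p,q}_{\del}$ (namely $\del\alpha=0$, $\delbar{*}\alpha=0$) and $\mathcal{H}^{p,q}_{\delbar}$ (namely $\delbar\alpha=0$, $\del{*}\alpha=0$). For $\supseteq$, if $\alpha\in\mathcal{H}^{p,q}_{\del}\cap\mathcal{H}^{p,q}_{\delbar}\cap P^{p,q}$ then $\del\alpha=\delbar\alpha=0$ and $\del{*}\alpha=\delbar{*}\alpha=0$; in particular $\del\delbar{*}\alpha=\del(\delbar{*}\alpha)=0$, which together with $\del\alpha=\delbar\alpha=0$ is exactly $BC$-harmonicity. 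For the inclusion \eqref{eq-delbar-ae}, a primitive $\delbar$-harmonic $\alpha$ has $\delbar\alpha=0$ and $\del{*}\alpha=0$; then $\del\delbar\alpha=\del(0)=0$, while the displayed implication gives $\delbar{*}\alpha=0$, and these three identities are precisely the characterization of $\mathcal{H}^{p,q}_{A}$. Finally the three ``in particular'' containments are immediate: the first two come from \eqref{eq-bc-del-delbar} by discarding one of the two intersected factors, and the third is the composition $\mathcal{H}^{p,q}_{BC}\cap P^{p,q}\subseteq\mathcal{H}^{p,q}_{\delbar}\cap P^{p,q}\subseteq\mathcal{H}^{p,q}_{A}\cap P^{p,q}$.

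I expect the only genuine point requiring care to be the identity $*\alpha=c\,L^{n-k}\alpha$ and the ensuing commutation $D{*}\alpha=c\,L^{n-k}(D\alpha)$: one must confirm that the constant produced by \eqref{*-primitive} is nonzero, and that $d\omega=0$ genuinely forces $\del\omega=\delbar\omega=0$ componentwise, which is precisely where the almost K\"ahler hypothesis enters. Everything else is a purely formal manipulation of the kernel characterizations, and, notably, no injectivity of $L^{n-k}$ is needed, since only the direction $D\alpha=0\Rightarrow D{*}\alpha=0$ is ever invoked.
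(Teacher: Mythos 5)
Your proposal is correct and follows essentially the same route as the paper: both rest on the identity $*\alpha=c\,\omega^{n-p-q}\wedge\alpha$ for primitive $\alpha$ (formula \eqref{*-primitive} with $r=0$), use $d\omega=0$ to get $D*\alpha=c\,\omega^{n-p-q}\wedge D\alpha$ for $D\in\{\del,\delbar\}$, and then read off each inclusion from the kernel characterizations of the Laplacians. The paper likewise observes that the inclusion $\mathcal{H}^{p,q}_{\delbar}\cap\mathcal{H}^{p,q}_{\del}\cap P^{p,q}\subseteq\mathcal{H}^{p,q}_{BC}$ needs no primitivity, exactly as you note.
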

\begin{proof}
We start by showing \eqref{eq-bc-del-delbar}.
Let $\alpha\in \mathcal{H}^{p,q}_{\delbar}\cap \mathcal{H}^{p,q}_{\del}\cap P^{p,q}$, i.e.,
$$
\del\alpha=\delbar\alpha=\del*\alpha=\delbar*\alpha=0.
$$
Hence, $\del\alpha=\delbar\alpha=\del\delbar*\alpha=0$ and so $\alpha\in \mathcal{H}^{p,q}_{BC}$. Notice that in fact we did not use that $\alpha$ is a primitive form.\\
Viceversa, let $\alpha\in \mathcal{H}^{p,q}_{BC}\cap P^{p,q}$, i.e.,
$$
\del\alpha=\delbar\alpha=\del\delbar*\alpha=0.
$$
Since $\alpha$ is primitive, $*\alpha=c\,\omega^{n-p-q}\wedge\alpha$, with $c=\frac{(-1)^{\frac{(p+q)(p+q+1)}{2}}}{(n-p-q)!}i^{p-q}$ and since $\omega$ is $d$-closed we have
$$
\del*\alpha=c\,\del(\omega^{n-p-q}\wedge\alpha)=
c\,\omega^{n-p-q}\wedge\del\alpha=0
$$
and similarly
$$
\delbar*\alpha=c\,\delbar(\omega^{n-p-q}\wedge\alpha)=
c\,\omega^{n-p-q}\wedge\delbar\alpha=0
$$
so $\alpha\in \mathcal{H}^{p,q}_{\delbar}\cap \mathcal{H}^{p,q}_{\del}\cap P^{p,q}$.

Now we show \eqref{eq-delbar-ae}.
Let $\alpha\in \mathcal{H}^{p,q}_{\delbar}\cap P^{p,q}$, i.e.,
$$
\delbar\alpha=\del*\alpha=0.
$$
In particular $
\del\delbar\alpha=\del*\alpha=0$.
To prove $\delbar*\alpha=0$, which implies $\alpha\in \mathcal{H}^{p,q}_{A}\cap P^{p,q}$, recall that $*\alpha=c\,\omega^{n-p-q}\wedge\alpha$, and again since $\omega$ is $d$-closed we have
\begin{equation*}
\delbar*\alpha=c\,\delbar(\omega^{n-p-q}\wedge\alpha)=
c\,\omega^{n-p-q}\wedge\delbar\alpha=0.\qedhere
\end{equation*}
\end{proof}

In fact, in some special bidegrees also the other inclusions hold and so, for primitive forms, all the spaces of harmonic forms coincide.
\begin{prop}\label{prop:equalities-deg-n}
Let $(X,J,\omega)$ be a compact almost K\"ahler manifold of dimension $2n$.
Then, for $p+q= n$,
$$
\mathcal{H}^{p,q}_{BC}\cap P^{p,q}=
\mathcal{H}^{p,q}_{\delbar}\cap P^{p,q}=
 \mathcal{H}^{p,q}_{\del}\cap P^{p,q}=
\mathcal{H}^{p,q}_{A}\cap P^{p,q}.
$$
\end{prop}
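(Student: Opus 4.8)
The plan is to exploit that, in top total degree $p+q=n$, the Hodge star acts as a scalar on primitive $(p,q)$-forms, which collapses all the formal-adjoint conditions onto the two first-order conditions $\del\alpha=0$ and $\delbar\alpha=0$. This is precisely the feature that is absent when $p+q<n$, so the argument will be purely algebraic, with no appeal to the maximum principle or strong ellipticity needed in the $(1,1)$ case.

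First I would record the precise scalar. Applying \eqref{*-primitive} with $k=p+q=n$ and $r=0$, so that $L^{n-k-r}=L^0$ and the factor $\omega^{n-p-q}=\omega^0=1$, any $\alpha\in P^{p,q}$ satisfies $*\alpha=c\,\alpha$ with $c=(-1)^{\frac{n(n+1)}{2}}i^{p-q}$; this is exactly the constant appearing in the proof of Proposition \ref{prop:bc-in-delbar-in-aeppli} specialized to $\omega^{n-p-q}=1$, and in particular $c\neq0$. Consequently, for every primitive $\alpha$ of this bidegree one has $\del*\alpha=c\,\del\alpha$, $\delbar*\alpha=c\,\delbar\alpha$ and $\del\delbar*\alpha=c\,\del\delbar\alpha$.

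Next I would substitute this identity into the four harmonicity characterizations recorded in Section \ref{section:preliminaries}. For $\alpha\in P^{p,q}$ with $p+q=n$, membership in $\H^{p,q}_{\del}$ reads $\del\alpha=0$ together with $\delbar*\alpha=c\,\delbar\alpha=0$, i.e.\ $\del\alpha=\delbar\alpha=0$; membership in $\H^{p,q}_{\delbar}$ reads $\delbar\alpha=0$ together with $\del*\alpha=c\,\del\alpha=0$, again $\del\alpha=\delbar\alpha=0$; membership in $\H^{p,q}_{BC}$ reads $\del\alpha=\delbar\alpha=0$ together with $\del\delbar*\alpha=c\,\del\delbar\alpha$, the latter being automatic once $\delbar\alpha=0$; and membership in $\H^{p,q}_{A}$ reads $\del*\alpha=c\,\del\alpha=0$, $\delbar*\alpha=c\,\delbar\alpha=0$ and $\del\delbar\alpha=0$, where the last is again automatic once $\delbar\alpha=0$. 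Hence each of the four primitive spaces coincides with $\{\alpha\in P^{p,q}:\del\alpha=\delbar\alpha=0\}$, yielding the four equalities. Alternatively, one can shortcut the bookkeeping: Proposition \ref{prop:bc-in-delbar-in-aeppli} already supplies the inclusions $\H^{p,q}_{BC}\cap P^{p,q}\subseteq\H^{p,q}_{\del}\cap P^{p,q}$ and $\H^{p,q}_{BC}\cap P^{p,q}\subseteq\H^{p,q}_{\delbar}\cap P^{p,q}\subseteq\H^{p,q}_{A}\cap P^{p,q}$, so it remains only to close the cycle by proving $\H^{p,q}_{\del}\cap P^{p,q}\subseteq\H^{p,q}_{BC}\cap P^{p,q}$ and $\H^{p,q}_{A}\cap P^{p,q}\subseteq\H^{p,q}_{BC}\cap P^{p,q}$, both of which are immediate from $*\alpha=c\,\alpha$.

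I do not expect a genuine obstacle here, since the entire content is the algebraic relation $*\alpha=c\,\alpha$ on top-degree primitive forms. The only points that require care are confirming that the scalar $c$ is nonzero, which holds as it has modulus one, and using the specialization $\omega^{n-p-q}=1$ consistently throughout; both are routine once \eqref{*-primitive} is invoked at $k=n$.
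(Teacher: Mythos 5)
Your argument is correct and is essentially the paper's own proof: both rest on the identity $*\alpha=c\,\alpha$ for primitive $(p,q)$-forms with $p+q=n$ from \eqref{*-primitive}, with $c=(-1)^{\frac{n(n+1)}{2}}i^{p-q}\neq 0$, which reduces every harmonicity condition to $\del\alpha=\delbar\alpha=0$. Nothing further is needed.
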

\begin{proof}
Let $\alpha\in P^{p,q}$ for $p+q=n$, then by Formula (\ref{*-primitive}) $*\alpha=c_{p,q}\alpha$, with $c_{p,q}=(-1)^{\frac{n(n+1)}{2}}i^{p-q}$. Therefore,
$$
\del\alpha=0 \qquad \iff\qquad \del*\alpha=0\qquad \iff\qquad \delbar^*\alpha=0
$$
and
$$
\delbar\alpha=0 \qquad \iff\qquad \delbar*\alpha=0\qquad \iff\qquad \del^*\alpha=0\,.
$$
The equalities follow then directly from the definitions.
For instance, to prove $\mathcal{H}^{p,q}_{\delbar}\cap P^{p,q}\subseteq \mathcal{H}^{p,q}_{BC}\cap P^{p,q}$, let $\alpha\in \mathcal{H}^{p,q}_{\delbar}\cap P^{p,q}$, then $\delbar\alpha=0$ and $\del*\alpha=0$. Hence, by the previous observation on has also that $\delbar*\alpha=0$ and $\del\alpha=0$, giving in particular that $\del\delbar*\alpha=0$, $\delbar\alpha=0$ and $\del\alpha=0$ which means that $\alpha\in \mathcal{H}^{p,q}_{BC}\cap P^{p,q}$.\\
The other inclusions can be proved similarly.
\end{proof}

For $p+q<n$, the inclusions of Proposition \ref{prop:bc-in-delbar-in-aeppli} can be summed up in the following diagram.

\begin{equation*}
\begin{tikzcd}
\  & \H^{p,q}_{\delbar}\cap P^{p,q}  \arrow[rd,"\subseteq"] &\  \\
\H^{p,q}_{BC}\cap P^{p,q}  \arrow[rd,"\subseteq"] \arrow[ru,"\subseteq"] \arrow[rr,"\subseteq"]  &  \     &   \H^{p,q}_{A}\cap P^{p,q}\\
\ & \H^{p,q}_{\del}\cap P^{p,q}&\ 
\end{tikzcd}
\end{equation*}

Combining Proposition \ref{prop:bc-in-delbar-in-aeppli} for $(p,q)=(1,1)$ with Theorems \ref{thm:decomp-bc}, \ref{thm:decomp-ae} and \ref{thm:decomp-bc-2} we obtain the following

\begin{cor}
Let $(X,J,\omega)$ be a compact almost K\"ahler manifold of dimension $2n$.
Then,
$$
\mathcal{H}^{1,1}_{BC}(X)\subseteq \mathcal{H}^{1,1}_{A}(X),
$$
and
$$
\mathcal{H}^{n-1,n-1}_{A}(X)\subseteq 
\mathcal{H}^{n-1,n-1}_{BC}(X)\,.
$$
\end{cor}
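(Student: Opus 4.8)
The plan is to reduce both inclusions to the single primitive containment $\mathcal{H}^{1,1}_{BC}\cap P^{1,1}\subseteq\mathcal{H}^{1,1}_{A}\cap P^{1,1}$ provided by Proposition \ref{prop:bc-in-delbar-in-aeppli} (the case $(p,q)=(1,1)$, which is admissible since $1+1\le n$), and then to transport this containment through the primitive decompositions of Theorems \ref{thm:decomp-bc}, \ref{thm:decomp-ae} and \ref{thm:decomp-bc-2}. No new analysis is required: the entire argument is a matching of the direct summands appearing in those decompositions, so I would present it as a short bookkeeping proof.

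For the first inclusion I would recall, from Theorems \ref{thm:decomp-bc} and \ref{thm:decomp-ae}, the decompositions
\[
\mathcal{H}^{1,1}_{BC}=\C\,\omega\oplus\left(\mathcal{H}^{1,1}_{BC}\cap P^{1,1}\right),\qquad
\mathcal{H}^{1,1}_{A}=\C\,\omega\oplus\left(\mathcal{H}^{1,1}_{A}\cap P^{1,1}\right).
\]
The non-primitive summand $\C\,\omega$ is literally common to both, so it is enough to observe that the primitive summands are nested; but that is exactly $\mathcal{H}^{1,1}_{BC}\cap P^{1,1}\subseteq\mathcal{H}^{1,1}_{A}\cap P^{1,1}$ from Proposition \ref{prop:bc-in-delbar-in-aeppli}. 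Summing the two summand-wise inclusions yields $\mathcal{H}^{1,1}_{BC}\subseteq\mathcal{H}^{1,1}_{A}$.

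For the second inclusion I see two equivalent routes. The direct one uses Theorem \ref{thm:decomp-bc-2}, which gives $\mathcal{H}^{n-1,n-1}_{A}=\C\,\omega^{n-1}\oplus L^{n-2}(\mathcal{H}^{1,1}_{BC}\cap P^{1,1})$ and $\mathcal{H}^{n-1,n-1}_{BC}=\C\,\omega^{n-1}\oplus L^{n-2}(\mathcal{H}^{1,1}_{A}\cap P^{1,1})$; again $\C\,\omega^{n-1}$ is shared, and applying the linear operator $L^{n-2}$ to the inclusion $\mathcal{H}^{1,1}_{BC}\cap P^{1,1}\subseteq\mathcal{H}^{1,1}_{A}\cap P^{1,1}$ shows the remaining primitive summand of $\mathcal{H}^{n-1,n-1}_{A}$ lies inside that of $\mathcal{H}^{n-1,n-1}_{BC}$, whence $\mathcal{H}^{n-1,n-1}_{A}\subseteq\mathcal{H}^{n-1,n-1}_{BC}$. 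More cheaply, I would simply apply the Hodge star to the first inclusion: by \eqref{eq-star-bc-ae} one has $*\mathcal{H}^{1,1}_{BC}=\mathcal{H}^{n-1,n-1}_{A}$ and $*\mathcal{H}^{1,1}_{A}=\mathcal{H}^{n-1,n-1}_{BC}$, and since $*$ is a linear isomorphism it preserves inclusions, turning $\mathcal{H}^{1,1}_{BC}\subseteq\mathcal{H}^{1,1}_{A}$ into $\mathcal{H}^{n-1,n-1}_{A}\subseteq\mathcal{H}^{n-1,n-1}_{BC}$. I expect there to be no substantive obstacle here at all: all the genuine content is already packaged in the decomposition theorems and in Proposition \ref{prop:bc-in-delbar-in-aeppli}, so the only thing to be careful about is correctly identifying which primitive space feeds which summand (note the Bott-Chern and Aeppli primitive parts get \emph{swapped} in passing between bidegrees $(1,1)$ and $(n-1,n-1)$), which is precisely what makes the two inclusions point in opposite directions.
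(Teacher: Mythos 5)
Your proposal is correct and matches the paper's argument exactly: the corollary is obtained by combining the inclusion $\mathcal{H}^{1,1}_{BC}\cap P^{1,1}\subseteq\mathcal{H}^{1,1}_{A}\cap P^{1,1}$ from Proposition \ref{prop:bc-in-delbar-in-aeppli} with the decompositions of Theorems \ref{thm:decomp-bc}, \ref{thm:decomp-ae} and \ref{thm:decomp-bc-2}, and your observation about the swap of the Bott-Chern and Aeppli primitive summands between bidegrees $(1,1)$ and $(n-1,n-1)$ is precisely the point. The alternative route via the Hodge star and \eqref{eq-star-bc-ae} is equally valid and consistent with the paper's framework.
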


Now we study whether the inclusions in Proposition \ref{prop:bc-in-delbar-in-aeppli} are strict, and in general if there are other inclusions between the spaces of primitive harmonic forms.

\begin{prop}\label{prop:not-in}
There exists a compact almost K\"ahler 6-manifold $(X,J,\omega)$ such that
\begin{align}
\mathcal{H}^{1,1}_{\delbar}\cap P^{1,1}&\not\subseteq
\mathcal{H}^{1,1}_{BC}\cap P^{1,1},\label{eq-1}\\
\mathcal{H}^{1,1}_{\delbar}\cap P^{1,1}&\not\subseteq
\mathcal{H}^{1,1}_{\del}\cap P^{1,1},\label{eq-2}\\
\mathcal{H}^{1,1}_{A}\cap P^{1,1}&\not\subseteq 
\mathcal{H}^{1,1}_{BC}\cap P^{1,1},\label{eq-2.5}\\
\mathcal{H}^{1,1}_{A}\cap P^{1,1}&\not\subseteq 
\mathcal{H}^{1,1}_{\del}\cap P^{1,1},\label{eq-3}\\
\mathcal{H}^{1,1}_{\del}\cap P^{1,1}&\not\subseteq
\mathcal{H}^{1,1}_{\delbar}\cap P^{1,1},\label{eq-4}\\
\mathcal{H}^{1,1}_{\del}\cap P^{1,1}&\not\subseteq
\mathcal{H}^{1,1}_{BC}\cap P^{1,1},\label{eq-5}\\
\mathcal{H}^{1,1}_{\del}\cap P^{1,1}&\not\subseteq 
\mathcal{H}^{1,1}_{A}\cap P^{1,1}.\label{eq-6}
\end{align}
\end{prop}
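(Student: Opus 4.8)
My plan is to prove this existence statement by exhibiting a single explicit model together with concrete witnesses. I would take $X=\Gamma\backslash G$ to be a compact $6$-dimensional nilmanifold carrying a left-invariant almost complex structure $J$ and a left-invariant compatible symplectic form $\omega$ (the Iwasawa nilmanifold, used later in Section~\ref{sec-dim-6}, is the natural candidate). Left-invariance is what makes the problem tractable: for an invariant form $\alpha$ the forms $\del\alpha$, $\delbar\alpha$, $*\alpha$ and all their combinations are again invariant, so the closedness characterisations of harmonic forms recalled in Section~\ref{section:preliminaries} reduce to purely linear-algebraic conditions on the finite-dimensional space of invariant primitive $(1,1)$-forms, and any invariant form satisfying them is genuinely harmonic. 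Thus it suffices to fix the structure equations for an invariant $(1,0)$-coframe $\phi^1,\phi^2,\phi^3$, check $d\omega=0$, and compute $\del,\delbar$ algebraically.

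The computation simplifies drastically in bidegree $(1,1)$ when $n=3$. For $\alpha\in P^{1,1}$, formula~\eqref{*-primitive} gives $*\alpha=-\omega\wedge\alpha=-L\alpha$, and $d\omega=0$ (hence $\del\omega=\delbar\omega=0$) together with a matching of bidegrees, exactly as in the proof of Proposition~\ref{prop:bc-in-delbar-in-aeppli}, yields $\del*\alpha=-L\del\alpha$ and $\delbar*\alpha=-L\delbar\alpha$. The harmonic characterisations therefore become, for $\alpha\in P^{1,1}$,
\begin{align*}
\alpha\in\mathcal{H}^{1,1}_{BC} &\iff \del\alpha=0,\ \delbar\alpha=0,\\
\alpha\in\mathcal{H}^{1,1}_{\delbar} &\iff \delbar\alpha=0,\ L\del\alpha=0,\\
\alpha\in\mathcal{H}^{1,1}_{\del} &\iff \del\alpha=0,\ L\delbar\alpha=0,\\
\alpha\in\mathcal{H}^{1,1}_{A} &\iff L\del\alpha=0,\ L\delbar\alpha=0,\ \del\delbar\alpha=0,
\end{align*}
where $L\del\alpha=0$ says precisely that the $(2,1)$-form $\del\alpha$ is primitive.

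These reformulations collapse the entire proposition to producing one invariant primitive $(1,1)$-form $\alpha$ with
\[
\delbar\alpha=0,\qquad \del\alpha\neq0,\qquad L\del\alpha=0,\qquad \delbar\del\alpha\neq0.
\]
Indeed such an $\alpha$ lies in $\mathcal{H}^{1,1}_{\delbar}\cap P^{1,1}$ and in $\mathcal{H}^{1,1}_{A}\cap P^{1,1}$ but, since $\del\alpha\neq0$, in neither $\mathcal{H}^{1,1}_{BC}$ nor $\mathcal{H}^{1,1}_{\del}$; this gives \eqref{eq-1}, \eqref{eq-2}, \eqref{eq-2.5} and \eqref{eq-3}. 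Its conjugate $\c\alpha\in P^{1,1}$ satisfies $\del\c\alpha=\c{\delbar\alpha}=0$ and $L\delbar\c\alpha=\c{L\del\alpha}=0$, so $\c\alpha\in\mathcal{H}^{1,1}_{\del}\cap P^{1,1}$; however $\delbar\c\alpha=\c{\del\alpha}\neq0$ excludes it from $\mathcal{H}^{1,1}_{\delbar}$ and from $\mathcal{H}^{1,1}_{BC}$, while $\del\delbar\c\alpha=\c{\delbar\del\alpha}\neq0$ excludes it from $\mathcal{H}^{1,1}_{A}$; this gives \eqref{eq-4}, \eqref{eq-5} and \eqref{eq-6}. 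So a single form and its conjugate witness all seven non-inclusions.

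The real content, and the main obstacle, is thus exhibiting such an $\alpha$ on an explicit almost K\"ahler $6$-manifold. I would search among invariant primitive $(1,1)$-forms on the Iwasawa nilmanifold for a $\delbar$-closed $\alpha$ whose $\del\alpha$ is a nonzero primitive $(2,1)$-form with $\delbar\del\alpha\neq0$. Note that the last condition genuinely requires the almost complex structure to be non-integrable: in the integrable case $\delbar\del=-\del\delbar$, so $\delbar\alpha=0$ would force $\delbar\del\alpha=-\del\delbar\alpha=0$; by the relation $\del\delbar+\delbar\del+\mu\c\mu+\c\mu\mu=0$, the obstruction $\delbar\del\alpha=-(\mu\c\mu+\c\mu\mu)\alpha$ is precisely controlled by $\c\mu\neq0$. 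Hence I need the Chevalley--Eilenberg differential to act asymmetrically enough in $\del$ and $\delbar$ on bidegree $(1,1)$, while still admitting the closed compatible form $\omega=\tfrac{i}{2}\sum_j\phi^j\wedge\c\phi^j$. Once suitable structure equations are fixed and $d\omega=0$ is checked, verifying primitivity of $\alpha$ and of $\del\alpha$ and the (non)vanishing of $\del\alpha$, $\delbar\alpha$ and $\delbar\del\alpha$ is a direct finite computation with the structure constants.
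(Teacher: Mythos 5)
Your reduction is correct and in fact isolates exactly the mechanism the paper uses: for $\alpha\in P^{1,1}$ on a $6$-manifold one has $*\alpha=-L\alpha$, so with $d\omega=0$ the four harmonicity conditions collapse to the conditions on $\del\alpha$, $\delbar\alpha$, $L\del\alpha$, $L\delbar\alpha$, $\del\delbar\alpha$ that you list, and a single primitive form $\alpha$ with $\delbar\alpha=0$, $\del\alpha\neq0$, $L\del\alpha=0$, $\delbar\del\alpha\neq0$, together with its conjugate, witnesses all seven non-inclusions. This is precisely the logical skeleton of the paper's Example \ref{ex:toro}, whose two witnesses $\varphi^{2\bar1}$ and $\varphi^{1\bar2}$ are conjugate to one another up to sign and satisfy exactly your four conditions. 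Your remark that $\delbar\del\alpha=-(\mu\c\mu+\c\mu\mu)\alpha$ forces non-integrability is also a correct and useful sanity check.

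The genuine gap is that you never exhibit the witness, so the existence statement is not actually proved. Everything you have written reduces the proposition to ``find such an $\alpha$ on some compact almost K\"ahler $6$-manifold,'' and you explicitly defer that step; but for an existence result this deferred step \emph{is} the proof. Moreover, your proposed hunting ground is riskier than you suggest: the paper does not use the Iwasawa manifold here, but rather the torus $\mathbb{T}^6$ with the coframe $\varphi^1=e^g dx^1+i e^{-g}dy^1$, $\varphi^2=dx^2+idy^2$, $\varphi^3=dx^3+idy^3$ twisted by a non-constant function $g(x^3,y^3)$ --- a structure that is \emph{not} left-invariant, so its witnesses $\varphi^{2\bar1}$, $\varphi^{1\bar2}$ lie outside the constant-coefficient ansatz you propose. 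Whether the left-invariant almost K\"ahler structure on the Iwasawa manifold from Section \ref{sec-dim-6} admits a left-invariant primitive $(1,1)$-form with your four properties is an unverified finite computation; it might fail, in which case your search would come up empty even though the proposition is true. To close the argument you must either carry out that computation and succeed, or adopt an example such as the paper's warped torus, where $\delbar\varphi^{2\bar1}=0$, $\del\varphi^{2\bar1}=-\overline{V_3(g)}\varphi^{12\bar3}\neq0$, $\omega\wedge\varphi^{12\bar3}=0$, and $\delbar\del\varphi^{2\bar1}\neq0$ are immediate from the structure equations.
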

\begin{proof}
We refer to Example \ref{ex:toro} for the proof of this Proposition.
\end{proof}

\begin{ex}\label{ex:toro}
We recall the following construction of \cite{cattaneo-tardini-tomassini}.
Let $X=\mathbb{T}^6=\mathbb{Z}^6\backslash \mathbb{R}^6$ be the $6$-dimensional torus with $(x^1,x^2,x^3,y^1,y^2,y^3)$ coordinates on $\mathbb{R}^6$. Let $g=g(x^3,y^3)$ be a non-constant function on $\mathbb{T}^6$.
We define an almost complex structure $J$ setting as global co-frame of $(1,0)$-forms 
$$
\varphi^1:=e^gdx^1+i\,e^{-g}dy^1,\quad
\varphi^2:=dx^2+i\,dy^2,\quad
\varphi^3:=dx^3+i\,dy^3\,.
$$
The structure equations are
$$
d\varphi^1=V_3(g)\varphi^{3\bar1}-\bar V_3(g)\varphi^{\bar1\bar3}\,,\quad
d\varphi^2=d\varphi^3=0,
$$
where $\left\lbrace V_1,V_2,V_3\right\rbrace$ denotes the global frame of vector fields dual to $\left\lbrace\varphi^1,\varphi^2,\varphi^3\right\rbrace$. Notice that in particular $J$ is not integrable.\\
Then, the $(1,1)$-form
$$
\omega:=\frac{i}{2}\varphi^{1\bar 1}+\frac{i}{2}\varphi^{2\bar 2}+\frac{i}{2}\varphi^{3\bar 3}
$$
is a compatible symplectic structure, namely $(J,\omega)$ is an almost K\"ahler structure on $\mathbb{T}^6$.

We will show that the form $\phi^{2\bar1}$ verifies claims \eqref{eq-1}, \eqref{eq-2}, \eqref{eq-2.5}, \eqref{eq-3}, while the form $\phi^{1\bar2}$ verifies \eqref{eq-4}, \eqref{eq-5}, \eqref{eq-6}.

First, note that
\[
\delbar\varphi^{2\bar1}=0
\]
and
$$
\del*\varphi^{2\bar1}=-\omega\wedge\del\varphi^{2\bar1}=
\overline{V_3(g)}\omega\wedge\varphi^{12\bar 3}=0,
$$
thus 
$$
\varphi^{2\bar1}\in \mathcal{H}^{1,1}_{\delbar}\cap P^{1,1}\subseteq \mathcal{H}^{1,1}_{A}\cap P^{1,1}.
$$
On the other hand 
$$
\del\varphi^{2\bar1}=-\overline{V_3(g)}\varphi^{12\bar 3}\neq 0
$$
and so
$$
\varphi^{2\bar1}\notin \mathcal{H}^{1,1}_{\del}\cap P^{1,1}\supseteq\mathcal{H}^{1,1}_{BC}\cap P^{1,1}\,.
$$
This proves \eqref{eq-1}, \eqref{eq-2}, \eqref{eq-2.5}, \eqref{eq-3}.

Now, note that 
\[
\del\phi^{1\bar2}=0
\]
and
$$
\delbar*\varphi^{1\bar2}=-\omega\wedge\delbar\varphi^{1\bar2}=
-{V_3(g)}\omega\wedge\varphi^{3\bar 1\bar2}=0,
$$
thus
$$
\varphi^{1\bar2}\in \mathcal{H}^{1,1}_{\del}\cap P^{1,1}.
$$
On the other hand
\[
\del\delbar\varphi^{1\bar 2}=\del(V_3(g)\varphi^{3\bar1\bar2})=-V_3(g)\overline{V_3(g)}\varphi^{3\bar31\bar2}\ne0
\]
and so
$$
\varphi^{1\bar2}\notin \mathcal{H}^{1,1}_{A}\cap P^{1,1}\supseteq\mathcal{H}^{1,1}_{\delbar}\cap P^{1,1}\supseteq\mathcal{H}^{1,1}_{BC}\cap P^{1,1}\,.
$$
This proves \eqref{eq-4}, \eqref{eq-5}, \eqref{eq-6}.\qed
\end{ex} 

Combining Propositions \ref{prop:bc-in-delbar-in-aeppli} and \ref{prop:not-in} one finds the following diagram of strict inclusions.

\begin{equation}\label{diagram-incl}
\begin{tikzcd}
\  & \H^{p,q}_{\delbar}\cap P^{p,q}  \arrow[rd,"\subseteq"] &\  \\
\H^{p,q}_{BC}\cap P^{p,q}  \arrow[rd,"\subsetneq"] \arrow[ru,"\subsetneq"] \arrow[rr,"\subsetneq"]  &  \     &   \H^{p,q}_{A}\cap P^{p,q}\\
\ & \H^{p,q}_{\del}\cap P^{p,q}&\ 
\end{tikzcd}
\end{equation}

The remaining non-inclusions of Propositions \ref{prop:not-in} (which are not already included in diagram \eqref{diagram-incl}) can be summed up in the following diagram.

\begin{equation}\label{diagram-non-incl}
\begin{tikzcd}
\  & \H^{p,q}_{\delbar}\cap P^{p,q} \arrow[dd,"\not\subseteq", shift left=1ex]   &\  \\
\H^{p,q}_{BC}\cap P^{p,q}     &  \     &   \H^{p,q}_{A}\cap P^{p,q} \arrow[ld,"\not\subseteq", shift left=2.5ex] \\
\ & \H^{p,q}_{\del}\cap P^{p,q} \arrow[ru,"\not\subseteq", shift right=0.5ex] \arrow[uu,"\not\subseteq", shift left=1ex] &\ 
\end{tikzcd}
\end{equation}

It remains open to understand if $\H^{p,q}_{A}\cap P^{p,q}$ is either contained or not in $\H^{p,q}_{\delbar}\cap P^{p,q}$ in general.

\begin{rem}
We notice that in \cite{TT0} the spaces of Bott-Chern and Aeppli harmonic forms were introduced using the operators $\bar\delta$ and $\delta$. In fact, it was shown that with respect to such operators, on compact almost K\"ahler manifolds one has the usual equalities (that are true for K\"ahler manifolds), namely (cf. \cite[Proposition 6.2, Theorem 6.7, Proposition 6.10, Corollary 6.12]{TT0})
$$
\mathcal{H}^{p,q}_{BC(\delta,\bar\delta)}(X)=
 \mathcal{H}^{p,q}_{A(\delta,\bar\delta)}(X)=
 \mathcal{H}^{p,q}_{\bar\delta}(X)=
\mathcal{H}^{p,q}_{\delta}(X)=
\mathcal{H}^{p,q}_{d}(X).
$$
\end{rem}

\section{Primitive decompositions of harmonic forms in dimension $6$}\label{sec-dim-6}

Let $(X,J,\omega)$ be a compact almost K\"ahler manifold of dimension $2n$. In Section \ref{sec-prim-dec} we saw that the primitive decompositions of $(p,q)$-forms descend to Bott-Chern and Aeppli harmonic forms for the special bidegrees $(1,1)$, $(p,0)$ and $(0,q)$.  By Bott-Chern and Aeppli duality, we saw that we can also deduce primitive decompositions for the bidegrees $(n-1,n-1)$, $(n,n-p)$ and $(n-q,n)$. But do these decompositions hold only for these special bidegrees? Are there other bidegrees with nice primitive decomposisions of the spaces of Bott-Chern and Aeppli harmonic forms? For $2n=2,4$, since the previous bidegrees are all the possible bidegrees, the situation is well understood. Therefore, to answer our question, we should investigate what happens in the dimension $2n=6$.

If $2n=6$, then the only bidegrees for which we do not still have primitive decompositions of the spaces of Bott-Chern and Aeppli harmonic forms are $(2,1)$, $(1,2)$. Let us focus on the bidegree $(2,1)$. The primitive decomposition of forms reads as
\begin{equation*}
A^{2,1}=P^{2,1}\oplus L\left(A^{1,0}\right).
\end{equation*}
Passing to Bott-Chern harmonic forms, it is immediate to see that 
\begin{equation}\label{eq-incl-bc-21}
\H^{2,1}_{BC}\supseteq\left(\H^{2,1}_{BC}\cap P^{2,1}\right)\oplus L\left(\H^{1,0}_{BC}\right).
\end{equation}
However, for Aeppli harmonic forms, a similar inclusion does not hold, because
in general
$$
L\left(\H^{1,0}_{A}\right)\nsubseteq \H^{2,1}_{A}\,.
$$
Indeed, let $\alpha\in\H^{1,0}_{A}$. For bidegree reasons, $\del\delbar\alpha=0$ and $\delbar*\alpha=0$ or, equivalently, since $\alpha$ is primitive, 
$\del\delbar\alpha=0$ and $\omega^2\wedge\delbar\alpha=0$. Note that this does not imply, in general, that $L\alpha=\omega\wedge \alpha\in\H^{2,1}_{A}$, indeed we cannot conclude that $\del*(\omega\wedge\alpha)=-i\omega\wedge\del\alpha$ is equal to zero.\\
Therefore, we will focus only on Bott-Chern harmonic forms.
At this point, we could hope that the inclusion of \eqref{eq-incl-bc-21} is indeed an identity.
In fact, it does not happen, as it is shown by the following

\begin{prop}
There exists a compact almost K\"ahler $6$-dimensional manifold $(X,J,\omega)$ such that
\[
\H^{2,1}_{BC}\neq\left(\H^{2,1}_{BC}\cap P^{2,1}\right)\oplus L\left(\H^{1,0}_{BC}\right)
\]
\end{prop}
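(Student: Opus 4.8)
The plan is to turn the statement into the single concrete question of whether the \emph{trace} of a Bott-Chern harmonic $(2,1)$-form can fail to be harmonic. By \eqref{eq-incl-bc-21} the inclusion $\supseteq$ always holds, so only strictness needs an example. Given $\psi\in\H^{2,1}_{BC}$, write its primitive decomposition \eqref{primitive-bundle-decomposition} as $\psi=\gamma+L\beta$ with $\gamma\in P^{2,1}$ and $\beta\in A^{1,0}$; since every $1$-form is primitive and $[\Lambda,L]=(n-k)\id$ equals $2\,\id$ on $1$-forms, applying $\Lambda$ gives $\Lambda\psi=2\beta$, i.e. $\beta=\tfrac12\Lambda\psi$. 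I would first record the clean criterion: for $\psi\in\H^{2,1}_{BC}$ one has $\psi\in(\H^{2,1}_{BC}\cap P^{2,1})\oplus L(\H^{1,0}_{BC})$ if and only if $\tfrac12\Lambda\psi\in\H^{1,0}_{BC}$. The ``if'' direction uses \eqref{eq-incl-bc-21} to get $L\beta\in\H^{2,1}_{BC}$, so that $\gamma=\psi-L\beta$ is then automatically a primitive Bott-Chern harmonic form; the ``only if'' direction is uniqueness of the primitive decomposition. Hence it suffices to exhibit one $\psi\in\H^{2,1}_{BC}$ whose trace $\Lambda\psi$ is \emph{not} Bott-Chern harmonic.

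Second, I would simplify the target space $\H^{1,0}_{BC}$. For a $(1,0)$-form $\beta$ formula \eqref{*-primitive} gives $*\beta=-\tfrac{i}{(n-1)!}\,\omega^{\,n-1}\wedge\beta$, and since $\omega$ is $d$-closed one has $\delbar*\beta=-\tfrac{i}{(n-1)!}\,\omega^{\,n-1}\wedge\delbar\beta$; thus $\delbar\beta=0$ already forces $\del\delbar*\beta=0$. Consequently $\H^{1,0}_{BC}$ is exactly the space of $(1,0)$-forms $\beta$ with $\del\beta=\delbar\beta=0$, and the task reduces further to producing $\psi\in\H^{2,1}_{BC}$ with $\del(\Lambda\psi)\neq0$ or $\delbar(\Lambda\psi)\neq0$. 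In particular a purely Lefschetz form $L\beta$ can never work (if $L\beta$ is harmonic, then $L\del\beta=L\delbar\beta=0$ forces $\del\beta=\delbar\beta=0$ by injectivity of $L$, so $\beta\in\H^{1,0}_{BC}$), so the witness must genuinely mix a primitive part $\gamma$ with a non-closed trace $\beta$.

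For the example I would take the Iwasawa manifold $X=\Gamma\backslash G$ with an explicit left-invariant almost K\"ahler structure $(J,\omega)$, $\omega=\tfrac i2\sum_s\varphi^{s\bar s}$, chosen non-integrable so that $\del$ and $\delbar$ act nontrivially on the coframe. The witness should be a genuine mix $\psi=\gamma+L\beta$, and I would look for a $(1,0)$-form $\beta$ with $\del\beta\neq0$ (or $\delbar\beta\neq0$) together with a primitive $\gamma\in P^{2,1}$ solving simultaneously $\del\gamma=-L\del\beta$, $\delbar\gamma=-L\delbar\beta$ and $\del\delbar*(\gamma+L\beta)=0$, the last evaluated through \eqref{*-primitive}; then $\psi\in\H^{2,1}_{BC}$ while $\Lambda\psi=2\beta\notin\H^{1,0}_{BC}$. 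Equivalently one computes the three dimensions on the explicit structure and checks $\dim\H^{2,1}_{BC}>\dim(\H^{2,1}_{BC}\cap P^{2,1})+\dim\H^{1,0}_{BC}$, the right-hand side being the dimension of the direct sum in \eqref{eq-incl-bc-21}. The main obstacle is the solvability of this coupled system with $\beta$ not $\del,\delbar$-closed: one must verify that, on the chosen Iwasawa structure, $L\del\beta\in\del(P^{2,1})$ and $L\delbar\beta\in\delbar(P^{2,1})$ for some non-closed $\beta$, and that a corresponding $\gamma$ can be taken to satisfy the second-order equation as well---this coupling between primitive and non-primitive parts is exactly what is rigid (hence absent) on a torus and available here. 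As in the proof of Theorem \ref{thm:decomp-bc}, an ellipticity/maximum-principle argument should reduce the whole system to a finite linear problem in the structure constants, making the search effective.
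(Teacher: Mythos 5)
Your reduction is correct and in fact cleanly packages what the paper's Example \ref{ex:iwasawa} does implicitly: by uniqueness of the primitive decomposition \eqref{primitive-bundle-decomposition} and the identity $\Lambda L=L\Lambda+(n-k)\id$, a form $\psi\in\H^{2,1}_{BC}$ lies in $\left(\H^{2,1}_{BC}\cap P^{2,1}\right)\oplus L\left(\H^{1,0}_{BC}\right)$ if and only if $\tfrac12\Lambda\psi\in\H^{1,0}_{BC}$, and via \eqref{*-primitive} the latter space is just $\{\beta\in A^{1,0}:\del\beta=\delbar\beta=0\}$. This is a genuinely useful reformulation. The problem is that everything after it is a plan, not a proof: the proposition is an existence statement, and you never exhibit the almost complex structure, the symplectic form, or the witness $\psi$, nor do you verify that the ``coupled system'' $\del\gamma=-L\del\beta$, $\delbar\gamma=-L\delbar\beta$, $\del\delbar*(\gamma+L\beta)=0$ actually has a solution with $\del\beta\neq0$ or $\delbar\beta\neq0$ on any concrete manifold. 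That solvability is precisely the content of the proposition; deferring it to an ``effective search'' leaves the proof empty. The paper resolves this by fixing a specific non-integrable left-invariant $J$ on the Iwasawa manifold (with coframe $\phi^1=e^1+ie^6$, $\phi^2=e^2+ie^5$, $\phi^3=e^3+ie^4$ and $\omega=i(\phi^{1\b1}+\phi^{2\b2}+\phi^{3\b3})$), computing the two-dimensional space of left-invariant Bott-Chern harmonic $(2,1)$-forms explicitly, and checking that $\phi^{13\b2}+\phi^{23\b1}-2i\phi^{23\b2}$ is harmonic but has nonzero, non-harmonic trace.

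A second, smaller gap: your final appeal to ``an ellipticity/maximum-principle argument'' to reduce the problem to a finite linear system in structure constants is not justified and is not how such a reduction works here. The Bott-Chern Laplacian on a nilmanifold need not have only left-invariant kernel in general, and no maximum principle applies to $(2,1)$-forms. What actually makes the finite-dimensional computation conclusive in the paper is a different observation: if a left-invariant $\eta\in A^{2,1}$ decomposes as $\eta=\alpha+L\beta$ with $\alpha$ primitive, then $L^2\beta=L\eta$ is left-invariant and $L^2:\Lambda^1X\to\Lambda^5X$ is an isomorphism, so $\beta$ (hence $\alpha$) is left-invariant. One then only needs the left-invariant part of $\H^{2,1}_{BC}$, because the chosen witness is left-invariant and its would-be components in the direct sum would be forced to be left-invariant too. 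Without this step (or the explicit computation it enables), your argument does not close.
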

\begin{proof}
We refer to Example \ref{ex:iwasawa} for the proof of this Proposition.
\end{proof}

\begin{ex}\label{ex:iwasawa}
Let $X:=\Z[i]^3\backslash (\C^3,\cdot)$ be the Iwasawa manifold, where the group structure on $\C^3$ is defined by
\[
(w^1,w^2,w^3)\cdot(z^1,z^2,z^3)=(w^1+z^1,w^2+z^2,w^3+w^1z^2+z^3).
\]
The standard complex structure of $\C^3$ induces, on $X$, the complex structure given by
\[
\psi^1=dz^1,\ \ \ \psi^2=dz^2,\ \ \ \psi^3=-z^1dz^2+dz^3
\]
being a global coframe of $(1,0)$-forms. The complex structure equations are
\[
d\psi^1=0,\ \ \ d\psi^2=0,\ \ \ d\psi^3=-\psi^{12}.
\]
 If we set
\[
\psi^1=e^1+ie^2,\ \ \ \psi^2=e^3+ie^4,\ \ \ \psi^3=e^5+ie^6,
\]
then the real structure equations are
\[
de^1=de^2=de^3=de^4=0,\ \ \ de^5=-e^{13}+e^{24},\ \ \ de^6=-e^{14}-e^{23}.
\]
Let us consider the non integrable left-invariant almost complex structure $J$ given by
\[
\phi^1=e^1+ie^6,\ \ \ \phi^2=e^2+ie^5,\ \ \ \phi^3=e^3+ie^4
\]
being a global coframe of $(1,0)$-forms. By a direct computation the structure equations become (cf. also \cite{tardini-tomassini-dim6})
\begin{align*}
4\,d\phi^1&=-\phi^{13}-i\phi^{23}+\phi^{1\b3}+\phi^{3\b1}-i\phi^{2\b3}+i\phi^{3\b2}+\phi^{\b1\b3}-i\phi^{\b2\b3},\\
4\,d\phi^2&=-i\phi^{13}+\phi^{23}-i\phi^{1\b3}+i\phi^{3\b1}-\phi^{2\b3}-\phi^{3\b2}-i\phi^{\b1\b3}-\phi^{\b2\b3},\\
d\phi^3&=0.
\end{align*}
Endow $(X,J)$ with the left-invariant almost K\"ahler structure given by
\[
\omega=2(e^{16}+e^{25}+e^{34})=i(\phi^{1\b1}+\phi^{2\b2}+\phi^{3\b3}).
\]

First, we do the following observation that will allow us to work with only left-invariant forms (cf. \cite[Lemma 5.2]{cattaneo-tardini-tomassini}). Take $\eta\in A^{2,1}$ and assume it is left-invariant. By \eqref{eq-prim-dec-forms}, it follows that
\[
\eta=\alpha+L\beta,
\]
with $\alpha\in A^{2,1}$ primitive, i.e., $L\alpha=0$ and $\beta\in A^{1,0}$ ($\beta$ is in fact primitive for bidegree reasons). We apply $L$ and find $L\eta=L^2\beta$. Note that $L\eta$, and so $L^2\beta$, are left-invariant, and that $L^2:\Lambda^1X\to\Lambda^5X$ is an isomorphism at the level of the exterior algebra. Therefore, also $\beta$ is left-invariant. Now, since $L\beta$ and $\eta$ are left-invariant, it follows that also $\alpha$ is left-invariant. Summing up, if $\eta\in A^{2,1}$ is left-invariant, $\eta=\alpha+L\beta$ and $L\alpha=0$, then $\alpha$ and $\beta$ are left-invariant, too.

We want to find an element $\eta\in A^{2,1}$ which is contained in $\H^{2,1}_{BC}$ but is not contained in 
\[
\left(\H^{2,1}_{BC}\cap P^{2,1}\right)\oplus L\left(\H^{1,0}_{BC}\right).
\]
Thanks to the previous argument, if $\eta\in \H^{2,1}_{BC}$ is left-invariant and $\eta=\alpha+L\beta$, with $\alpha\in\H^{2,1}_{BC}\cap P^{2,1}$ and $\beta\in\H^{1,0}_{BC}$, then $\alpha$ and $\beta$ are left-invariant.

A long, but direct and straightforward computation, shows that the space of left-invariant Bott-Chern harmonic $(2,1)$-forms is
\[
\C<\phi^{13\b1}+\phi^{23\b2},\phi^{13\b2}+\phi^{23\b1}-2i\phi^{23\b2}>,
\]
while it is easy to verify that the space of left-invariant forms which are contained in $L\left(\H^{1,0}_{BC}\right)$ is
\[
\C<\phi^{13\b1}+\phi^{23\b2}>.
\]
Since $L(\phi^{13\b2}+\phi^{23\b1}-2i\phi^{23\b2})=-2iL(\phi^{23\b2})\neq0$, it means that $\phi^{13\b2}+\phi^{23\b1}-2i\phi^{23\b2}$ is not primitive. Therefore $\phi^{13\b2}+\phi^{23\b1}-2i\phi^{23\b2}$ is a left-invariant, Bott-Chern harmonic $(2,1)$-form, but it is not contained in 
\[
\left(\H^{2,1}_{BC}\cap P^{2,1}\right)\oplus L\left(\H^{1,0}_{BC}\right).
\]
\end{ex}


\begin{thebibliography}{12}

\bibitem{cattaneo-tardini-tomassini} A. Cattaneo, N. Tardini, A. Tomassini,
Primitive decompositions of Dolbeault harmonic forms on compact almost-K\"ahler manifolds,
\texttt{arXiv:2201.09273 [math.DG]}, 2022.

\bibitem{cirici-wilson-1} J. Cirici, S. O. Wilson, 
Dolbeault cohomology for almost complex manifolds,
\emph{Adv. Math.}, \textbf{391} (2021).

\bibitem{cirici-wilson-2} J. Cirici, S. O. Wilson, 
Topological and geometric aspects of almost K\"ahler manifolds via harmonic theory,
 \emph{Sel. Math. New Ser.}, \textbf{26}, no. 35 (2020).
 
 
\bibitem{coelho-placini-stelzig} R. Coelho, G. Placini, J. Stelzig, Maximally non-integrable almost complex structures: an $h$-principle and cohomological properties, \texttt{arXiv:2105.12113 [math.DG]}, 2021.

\bibitem{hirzebruch} F. Hirzebruch, Some problems on differentiable and complex manifolds, \emph{Ann. Math. (2)} \textbf{60}, (1954). 213--236.

\bibitem{Ho} T. Holt, Bott-Chern and $\delbar$ Harmonic Forms on Almost Hermitian 4-Manifolds, {\tt arXiv:2111.00518}, 2021.

\bibitem{HZ} T. Holt, W. Zhang, Harmonic Forms on the Kodaira-Thurston Manifold, {\tt arXiv:2001.10962}, 2020.

\bibitem{HZ2} T. Holt, W. Zhang, Almost Kähler Kodaira-Spencer problem, {\tt arXiv:2010.12545}, 2021, to appear in {\em Math. Res. Lett.}.

\bibitem{PT4} R. Piovani, A. Tomassini, Bott-Chern Laplacian on almost Hermitian manifolds, arXiv:2107.05048, 2021, to appear in {\em Math. Z.}.

\bibitem{PT5} R. Piovani, A. Tomassini, On the dimension of Dolbeault harmonic $(1,1)$-forms on almost Hermitian $4$-manifolds, 2022.

\bibitem{TT0} N. Tardini, A. Tomassini, Differential operators on almost-Hermitian manifolds and harmonic forms, \emph{Complex Manifolds} 7 (2020), no. 1, 106–128.

\bibitem{TT} N. Tardini, A. Tomassini, $\delbar$-harmonic forms on 4-dimensional almost-hermitian manifolds, arXiv:2104.10594, 2021, to appear in {\em Math. Res. Lett.}.

\bibitem{tardini-tomassini-dim6} N. Tardini, A. Tomassini,
Almost-complex invariants of families of six-dimensional solvmanifolds,
\texttt{arXiv:2109.09100}, 2021.

\bibitem{weil} A. Weil, {\em Introduction \'{a} l'\'{E}tude des Vari\'{e}t\'{e} k\"ah\'{e}leriennes}, Publications de
l’Instiut de Math\'{e}matique de l’Universit\'{e} de Nancago VI, Hermann, Paris
(1958).

\end{thebibliography}
\end{document}